\DeclareMathOperator{\im}{Im} 
\DeclareMathOperator{\Hom}{Hom}
\DeclareMathOperator{\End}{End}
\DeclareMathOperator{\add}{add}
\DeclareMathOperator{\Add}{Add}
\DeclareMathOperator{\re}{r}
\DeclareMathOperator{\ex}{e}
\DeclareMathOperator{\Soc}{Soc}
\newcommand{\Modr}{\textrm{Mod-}R}
\newcommand{\homc}{\Hom_{\mathbf C}}
\newtheorem{definition}{Definition}[section]
\newtheorem{proposition}[definition]{Proposition}
\newtheorem{theorem}[definition]{Theorem}
\newtheorem{example}[definition]{Example}
\newtheorem{lemma}[definition]{Lemma}
\newtheorem{corollary}[definition]{Corollary}
\newtheorem{remark}[definition]{Remark}
\title{MAXIMAL IDEALS IN MODULE CATEGORIES AND APPLICATIONS}
 \author{Manuel Cort\'es-Izurdiaga}
 \address{Department of Mathematics, University of Almeria, E-04071,
   Almeria, Spain}
\thanks{The first author is partially supported by projects
  MTM2014-54439 and MTM2016-77445-P from MEC
  and by research group FQM211 from Junta de Andaluc\'{\i}a.}
\email{mizurdia@ual.es}
 \author{Alberto Facchini}
    \address{Dipartimento di Matematica, Universit\`a di Padova, 35121 Padova, Italy}
   \email{facchini@math.unipd.it}
\thanks{The second author is partially
supported by Dipartimento di Matematica, Universit\`a di Padova (Progetto SID 2016 BIRD163492/16 ``Categorical homological methods in the study of algebraic structures" and progetto DOR1690814 ``Anelli e categorie di moduli'').}
\begin{document}

\begin{abstract} We study the existence of maximal ideals in
  preadditive categories defining an order $\preceq$ between
  objects, in such a way that if there do not exist
  maximal objects with respect to $\preceq$, then there is no
  maximal ideal in the category. In our study, it is sometimes sufficient to restrict our
  attention to suitable subcategories. We give an example of
  a category $\mathbf C_F$ of modules over a right noetherian ring
  $R$ in which there is a unique maximal ideal. The category $\mathbf C_F$ is related to an indecomposable
  injective module $F$, and the objects of
  $\mathbf C_F$ are the $R$-modules of finite $F$-rank.
\end{abstract}
\maketitle

\section*{INTRODUCTION}
\label{sec:introduction}

This paper is related to the study of ideals in preadditive
categories. Recall that an {\em ideal} in a preadditive category
$\mathbf C$ is an additive subfunctor $\mathcal I$ of the additive
bifunctor
$\homc\colon \mathbf C^{op}\times \mathbf C \rightarrow \mathbf{Ab}$,
where $\mathbf{Ab}$ is the category of abelian groups.

Let us mention two motivations for our study. The first is related to
extensions of the classical Krull-Schmidt theorem to additive
categories. In \cite{Facchini}, the second author proved that the
class of all uniserial right modules over a ring $R$ does not satisfy
the Krull-Schmidt theorem, thus answering a question posed by Warfield in
1975, but that nevertheless a weak version of the Krull-Schmidt theorem for
uniserial modules holds \cite[Theorem 1.9]{Facchini}.  This weak
version of the Krull-Schmidt theorem was extended as follows, in
\cite[Theorem 6.4]{FacchiniPerone2}, to any additive category
$\mathbf A$ with a pair of ideals $\mathcal I$ and $\mathcal J$
satisfying suitable conditions: if $U_1, \ldots, U_n$ and
$V_1, \ldots, V_n$ are objects in $\mathbf A$ with local endomorphism
rings in the quotient categories $\mathbf A/\mathcal I$ and
$\mathbf A/\mathcal J$, then
$U_1 \oplus \cdots \oplus U_n \cong V_1 \oplus \cdots V_m$ if and only
if $n=m$ and there exist two permutations $\sigma$ and $\tau$ of
$\{1, \ldots, n\}$ such that $U_i$ and $V_{\sigma(i)}$ are isomorphic
in $\mathbf A/\mathcal I$, and $U_i$ and $V_{\tau(i)}$ are isomorphic
in $\mathbf A/\mathcal J$, for every $i =1, \ldots, n$.

Our second motivation is related to the problem of approximating
objects by morphisms belonging to some ideal. This idea first appeared
in \cite{Herzog}, where the author introduced phantom maps in module
categories, considered the ideal consisting of all such maps and
proved that each module $M$ has a phantom cover (that is, a phantom
map $\varphi\colon P \rightarrow M$ such that every phantom map
$\psi\colon Q \rightarrow M$ factors through $\varphi$, and minimal
with respect to this property). This particular situation was extended
in \cite{FuGuilHerzogTorrecillas}, where it was characterized when an
ideal $\mathcal I$ in an exact category provides approximations in
this sense. Notice that this theory contains, as a particular case,
the classical one about precovers and covers by objects, see
\cite{EnochsJenda}.

As in the case of ideals of rings, one can consider minimal and
maximal ideals in a preadditive category $\mathbf C$. In \cite[Theorem
3.1]{Facchini2}, it is proved that the minimal ideals in a module
category are in one-to-one correspondence with the simple
modules. Hence we have a complete description of the minimal ideals of
the category. A similar description of maximal ideals is not
known (the best description of maximal ideals is Prihoda's result
\cite[Lemma~2.1]{FacchiniPerone}). One of the main results of our
paper is now that there do not exist maximal ideals in module
categories Mod-$R$ (actually, in Grothendieck categories). The idea of the
proof is to define a order $\preceq$ in the class of objects
and relate the existence of maximal ideals with the existence of
non-zero maximal objects with respect to this order. More precisely, we prove (Theorem
\ref{t:MainTheorem}) that if for each object $A$ in the category there
exists an object $B$ such that $A \prec B$, then there do not
exist maximal ideals. Since a Grothendieck category has this property (Proposition \ref{p:ExistenceBigObjectsGrothendieck}), we
conclude that there are no maximal ideals in Grothendieck
categories.

If $\mathbf C$ is a preadditive category, we can
consider the full subcategory $\mathbf M(\mathbf C)$ of $\mathbf C$
consisting of all objects $C$ of $\mathbf C$ for which there do not
exist objects $B$ in $\mathbf C$ with $C\prec B$. Then the maximal ideals of
$\mathbf M(\mathbf C)$ determine those of $\mathbf C$ (Proposition
\ref{p:MaximalMC}). Using these ideas, the last part of the paper is
devoted to describing the maximal ideals in a full subcategory
$\mathbf C_F$ constructed starting from an indecomposable injective
module $F$ over a right noetherian ring.

All rings in this paper are associative with unit and not necessarily
commutative. If $R$ is such a ring, module will mean right $R$-module
and we will denote by $\Modr$ the category whose objects are all right
$R$-modules.

\section{PRELIMINARIES}
\label{sec:preliminaries}

By a {\em preadditive category}, we mean a category together with an
abelian group structure on each of its hom-sets such that composition
is bilinear. An {\em additive category} is a preadditive category with
finite products. Let $\mathbf C$ be a preadditive category and $A$ an
object of $\mathbf C$.  We will denote by $\add(A)$ the class of the
objects $X$ of $\mathbf C$ for which there exist an integer $n>0$ and
morphisms $f_1, \ldots, f_n \in \Hom_\mathbf{C}(A,X)$ and
$g_1, \ldots, g_n \in \Hom_\mathbf{C}(X,A)$ such that
$1_X = \sum_{i=1}^nf_ig_i$. If $\mathbf C$ is additive and idempotents
split in $\mathbf C$, then $X \in \add(A)$ if and only if $X$ is
isomorphic to a direct summand of $A^n$ for some integer $n\ge0$. If,
moreover, $\mathbf C$ has arbitrary direct sums, we will denote by
$\Add(A)$ the class of all objects that are isomorphic to direct
summmands of arbitrary direct sums of copies of $A$.

An {\em ideal} in $\mathbf C$ is an additive subfunctor $\mathcal I$
of the additive bifunctor
$\homc\colon \mathbf C^{op}\times \mathbf C \rightarrow \mathbf{Ab}$,
where $\mathbf{Ab}$ is the category of abelian groups. Thus
$\mathcal I$ associates to every pair $A$ and $B$ of objects in
$\mathbf C$ a subgroup $\mathcal I(A,B)$ of $\homc(A,B)$ so that if
$f\colon X \rightarrow A$ and $g\colon B \rightarrow Y$ are morphisms
in $\mathbf C$ and $i \in \mathcal I(A,B)$, then
$gif \in \mathcal I(X,Y)$. An ideal in $\mathbf C$ is {\em maximal} if
it is proper, that is, it is not equal to $\homc$, and is not properly
contained in any other proper ideal. For instance, it is easy to see
that the zero ideal is a maximal ideal in the full subcategory of
$\textrm{Mod-}K$ whose objects are all finite-dimensional vector
spaces over a field $K$.

Given an object $A$ in $\mathbf C$ and any two-sided ideal $I$ of
$\End_{\mathbf C}(A)$, we will denote by $\mathcal A_I$ the ideal of
the category $\mathbf C$ defined, for each pair of objects
$X,Y \in \mathbf C$, by
\[\begin{array}{l}
    \mathcal A_I(X,Y) = \{f \in \Hom_{\mathbf C}(X,Y):\beta f \alpha \in
    I \textrm{ for}\\ \qquad\qquad\qquad\textrm{ all } \alpha \in \Hom_{\mathbf C}(A,X) \textrm{ and }\beta \in
    \Hom_{\mathbf C}(Y,A)\}.\end{array}\]
  This ideal is called the {\em  ideal associated to $I$} (\cite[Section~2]{FacchiniPrihoda3} and \cite[Section~3]{FacchiniPrihoda4}). The ideal
  $\mathcal A_I$ contains any ideal $\mathcal I$ in $\mathbf C$
  satisfying $\mathcal I(A,A) \subseteq I$. As proved in
  \cite[Lemma 2.4]{FacchiniPerone}, there is a strong relation
  between ideals associated to maximal ideals of the endomorphism ring
  of an object, and maximal ideals in the preadditive category. For
  instance, the same argument as \cite[Proposition 2.5]{FacchiniPerone} gives:

  \begin{example}\label{e:MaximalIdeals}{\rm Let $\mathbf C$ be an
      additive category in which idempotent splits and $C$ any object
      of $\mathbf C$. Then the maximal ideals in the category
      $\add(C)$ are the ideals associated to maximal ideals of
      $\End_{\mathbf C}(C)$.}
  \end{example}

  The following easy lemma will be useful to compute ideals in the
  endomorphism ring of a finite direct sum of objects.

\begin{lemma}\label{l:IdealsInDirectSums}
  Let $\mathbf C$ be an additive category, $A$ an object of
  $\mathbf C$ and $I$ an ideal in $\End_{\mathbf C}(A)$. Given any
  finite family $B_1, \ldots, B_n$ of objects of $\mathbf C$, denote
  by $\iota_l$ and $\pi_l$ the inclusion and the projection
  corresponding to the $l$-th component of $B=\bigoplus_{i=1}^n B_i$
  for each $l =1,\dots,n$. Then
  \begin{displaymath}
    \mathcal A_I(B,B) = \{\,f \in \End_{\mathbf C}(B): \pi_m f
    \iota_l \in \mathcal A_I(B_l,B_m)  \ \textrm{for every } l,m =1,2,\dots,n\,\}.
  \end{displaymath}
\end{lemma}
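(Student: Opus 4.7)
The plan is to verify the two inclusions separately, using as the only non-trivial ingredient the standard identity $1_B = \sum_{l=1}^n \iota_l \pi_l$ that holds in any finite biproduct in an additive category.

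For the inclusion ``$\subseteq$'', assume $f \in \mathcal A_I(B,B)$, and fix indices $l,m$ together with morphisms $\alpha \in \Hom_{\mathbf C}(A,B_l)$ and $\beta \in \Hom_{\mathbf C}(B_m,A)$. Then $\iota_l \alpha \in \Hom_{\mathbf C}(A,B)$ and $\beta \pi_m \in \Hom_{\mathbf C}(B,A)$, so the defining property of $\mathcal A_I(B,B)$ immediately gives $(\beta \pi_m) f (\iota_l \alpha) = \beta (\pi_m f \iota_l) \alpha \in I$. As $\alpha,\beta$ were arbitrary, this says $\pi_m f \iota_l \in \mathcal A_I(B_l,B_m)$.

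For the inclusion ``$\supseteq$'', assume that $\pi_m f \iota_l \in \mathcal A_I(B_l,B_m)$ for all $l,m$, and take arbitrary $\alpha \in \Hom_{\mathbf C}(A,B)$ and $\beta \in \Hom_{\mathbf C}(B,A)$. Using $\sum_l \iota_l \pi_l = 1_B$ twice and bilinearity of composition, we expand
\[
\beta f \alpha = \beta \Bigl(\sum_{m} \iota_m \pi_m \Bigr) f \Bigl(\sum_{l} \iota_l \pi_l \Bigr) \alpha = \sum_{l,m} (\beta \iota_m)\,(\pi_m f \iota_l)\,(\pi_l \alpha).
\]
For each pair $(l,m)$, the morphism $\pi_l \alpha$ lies in $\Hom_{\mathbf C}(A,B_l)$ and $\beta \iota_m$ in $\Hom_{\mathbf C}(B_m,A)$, so by hypothesis each summand is in $I$. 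Since $I$ is a subgroup of $\End_{\mathbf C}(A)$, the sum $\beta f \alpha$ lies in $I$, showing $f \in \mathcal A_I(B,B)$.

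There is essentially no obstacle here: the content of the lemma is just the observation that an ideal in an additive category is determined on a finite biproduct by its values on the components, and the only step one needs to carry out carefully is the bilinear expansion above, which relies on nothing more than the biproduct identities $\pi_l \iota_l = 1_{B_l}$ and $\sum_l \iota_l \pi_l = 1_B$.
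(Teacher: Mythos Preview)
Your proof is correct. The paper does not include a proof of this lemma at all: it is introduced as ``the following easy lemma'' and left to the reader. The argument you give---checking one inclusion by precomposing and postcomposing with $\iota_l$ and $\pi_m$, and the other by expanding $\beta f \alpha$ via $1_B = \sum_l \iota_l \pi_l$---is precisely the standard verification the authors had in mind.
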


Note that, as a consequence of this result, if $M_1$ and $M_2$ are
objects in an additive category $\mathbf C$ and $I$ is an ideal in the
endomorphism ring of an object $A$ of $\mathbf C$, then
$\mathcal A_I(M_1 \oplus M_2,M_1 \oplus M_2)= \End_R(M_1 \oplus M_2)$
if and only if $\mathcal A_I(M_i,M_i)=\End_R(M_i)$ for
$i =1,2$.

\section{The strict order $\prec$ and its corresponding partial order $\preceq$.}
\label{sec:maxim-ideals-module+}

The existence of maximal ideals in preadditive categories is related
to an order~$\preceq$ between objects. In this section, we define the partial order $\preceq$ and give a number of examples.

\begin{definition} \label{d:big} Let $\mathbf C$ be a preadditive
  category and $A$, $B$ objects of $\mathbf C$. Set $A\prec B$
  if there exists an infinite subset
  $E \subseteq \Hom_{\mathbf C}(B,A) \times \Hom_{\mathbf C}(A,B)$
  with the following properties:
  \begin{enumerate}
  \item $f g = 1_A$ for every $(f,g) \in E$.

  \item For each $\varphi \in \Hom_{\mathbf C}(A,B)$,
    $|\{(f,g) \in E:f\varphi \neq 0\}| < |E|$.
  \end{enumerate} We shall write $A\preceq B$ if either $A\prec B$ or $A=B$.
\end{definition}

Here we are using the well known one-to-one correspondence between strict orders and partial orders. For any partial order $\le$, the corresponding strict order $<$ is defined by $A<B$ if $A\le B$ and $A\ne B$.

\medskip

Let $\mathbf C$ be a preadditive category, $\mathbf A$ a subcategory
of $\mathbf C$ and $A$ and $B$ objects of $\mathbf A$. Notice that it
can occur that $A\prec B$ in $\mathbf C$ but not in
$\mathbf A$. However, if $\mathbf A$ is full, $A\prec B$ in $\mathbf C$ if and only if $A\prec B$ in
$\mathbf A$.

\begin{example}{\rm Let $\mathbf C$ be any preadditive category and
    $A,B\in \mathbf C$ objects. If both $\Hom_{\mathbf C}(A,B)$ and
    $\Hom_{\mathbf C}(B,A)$ are finite, then $A\not\prec B$. In particular, if $\mathbf C$ has a zero
    object $0$, then $0\not\prec B$ and $B \not\prec 0$ for every object $B$.}
\end{example}

Let us see some properties of the order $\preceq$. 

\begin{lemma}\label{l:PropertiesOfBigObjects}
  Let $\mathbf C$ be a preadditive category
  and $A$, $B$ and $C$ objects of $\mathbf C$.
  \begin{enumerate}
  \item If $A\prec B$ and $B$ is a retract of $C$,
    then $A\prec C$.

  \item If $B \in \add(A)$, then $A\not\prec B$.
  \end{enumerate}
\end{lemma}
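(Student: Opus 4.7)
For part (1), the plan is to transport the witnessing set $E$ from $\Hom(B,A)\times\Hom(A,B)$ up to $\Hom(C,A)\times\Hom(A,C)$ using the retraction data. Let $\iota\colon B\to C$ and $\pi\colon C\to B$ satisfy $\pi\iota = 1_B$, and define
\[
E' := \{\,(f\pi,\iota g) : (f,g)\in E\,\}.
\]
Condition (1) in Definition \ref{d:big} is immediate: $(f\pi)(\iota g) = f(\pi\iota)g = fg = 1_A$. Before checking condition (2), I would verify that the map $(f,g)\mapsto (f\pi,\iota g)$ is injective, so that $|E'|=|E|$; this follows by precomposing with $\iota$ or postcomposing with $\pi$ and using $\pi\iota = 1_B$. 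Then, given $\varphi'\in\Hom(A,C)$, the key observation is that $(f\pi)\varphi' = f(\pi\varphi')$, so if I set $\varphi := \pi\varphi'\in\Hom(A,B)$, the set $\{(f\pi,\iota g)\in E' : (f\pi)\varphi'\neq 0\}$ is in bijection with $\{(f,g)\in E : f\varphi\neq 0\}$, and the latter has cardinality strictly less than $|E|=|E'|$ by hypothesis. Hence $A\prec C$.

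For part (2), I would argue by contradiction. Assume $B\in\add(A)$, so there exist $f_1,\dots,f_n\in\Hom(A,B)$ and $g_1,\dots,g_n\in\Hom(B,A)$ with $\sum_{i=1}^n f_i g_i = 1_B$. Suppose $A\prec B$ with witness $E$. Apply condition (2) of Definition \ref{d:big} with $\varphi = f_i$ for each $i$: the set $S_i := \{(f,g)\in E : ff_i\neq 0\}$ has $|S_i|<|E|$. Since $n$ is finite and $|E|$ is infinite, $|\bigcup_{i=1}^n S_i|<|E|$, so there exists $(f,g)\in E\setminus\bigcup_i S_i$, i.e.\ an $(f,g)$ with $ff_i=0$ for every $i$. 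Then
\[
f = f\cdot 1_B = f\sum_{i=1}^n f_i g_i = \sum_{i=1}^n (ff_i)g_i = 0,
\]
so $1_A = fg = 0$. This forces $A=0$, but then $\Hom(B,A)\times\Hom(A,B)$ has only one element and $E$ cannot be infinite, a contradiction.

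Neither step looks truly hard; the only subtlety is the bookkeeping on cardinalities in part (2) (a finite union of sets of cardinality strictly smaller than an infinite cardinal $\kappa$ still has cardinality smaller than $\kappa$) and the injectivity check in part (1). These are the spots where I would be careful, but nothing more delicate than that arises.
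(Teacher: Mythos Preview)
Your proof is correct and follows essentially the same route as the paper's: the same transport $E'=\{(f\pi,\iota g):(f,g)\in E\}$ in part~(1), and the same sets $S_i=\{(f,g)\in E:ff_i\neq 0\}$ in part~(2). The only cosmetic difference is in how part~(2) is wrapped up: you pick a pair $(f,g)$ outside $\bigcup_i S_i$ and deduce $f=0$, whereas the paper observes that $f\neq 0$ for every $(f,g)\in E$ and hence $E=\bigcup_i S_i$, forcing some $S_i$ to have cardinality $|E|$---these are contrapositive forms of the same argument, and your version has the minor advantage of handling the degenerate case $A=0$ explicitly.
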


\begin{proof}
  {\em (1)} Denote by $\iota_B\colon B \to C$ and
  $\pi_B\colon C \to B$ the morphisms satisfying
  $\pi_B\iota_B=1_B$. Since $A\prec B$, there exists
  a set
  $E \subseteq \Hom_{\mathbf C}(B,A) \times \Hom_{\mathbf C}(A,B)$
  satisfying the conditions of Definition \ref{d:big}. Then
  $E'=\{(f \pi_B,\iota_B g):(f,g) \in E\}$ is a subset of
  $\Hom_{\mathbf C}(B\oplus C,A) \times \Hom_{\mathbf C}(A,B\oplus C)$
  that has cardinality equal to $|E|$ and that trivially verifies the
  conditions of Definition \ref{d:big}. Thus $A\prec C$.
  
  {\em (2)} Let $n>0$ be an integer and $$f_1, \ldots, f_n \in
  \Hom_{\mathbf C}(A,B),\quad g_1, \ldots, g_n \in \Hom_{\mathbf C}(B,A)$$ be 
  such that $\sum_{i=1}^nf_ig_i=1_B$. Suppose, in
  order to get a contradiction, that $A\prec B$. Let
  $E \subseteq \Hom_\mathbf{C}\left(B,A\right) \times
  \Hom_\mathbf{C}\left(A,B\right)$
  be the set satisfying the conditions of Definition
  \ref{d:big}. By Definition \ref{d:big}{\em (2)}, the set
  \begin{displaymath}
    E_k:=\{(f,g) \in E: ff_k \neq 0\}
  \end{displaymath}
  has cardinality smaller than $|E|$ for each $k =1, \ldots, n$. But, for each morphism
  $\varphi:B \rightarrow A$, $\varphi \neq 0$ if and only if
  $\varphi f_k \neq 0$ for some $k =1, \ldots,  n$. This implies that $E
  = \bigcup_{k=1}^nE_k$ as $f \neq 0$ for each $(f,g) \in
  E$. Since $E$ is infinite, we conclude that at least one
  of the sets $E_k$ has the same cardinality as $E$, which is a
  contradiction.
\end{proof}

Let $\mathbf C$ be a preadditive category. The main consequence of the
preceeding result is that the relation $\prec$ is a strict order, since it is irreflexive by {\em (2)}
and transitive by {\em (1)}. As we have already said, we denote by $\preceq$ the partial order
associated to the strict order $\prec$.

Now we will consider a relation between large direct sums of copies of a non-zero object in a
Grothendieck category and the strict order $\prec$ of Definition
\ref{d:big}. Let $\mathbf G$ be a Grothendieck category, $A$ an object
of $\mathbf G$ and $\kappa$ an infinite regular cardinal. Recall that
$A$ is said to be {\em $< \kappa$-generated} \cite[Definition
1.67]{AdamekRosicky} if $\Hom_{\mathbf G}(A, -)$ commutes with
$\kappa$-directed colimits with all morphisms in the direct system being
monomorphisms (a $\kappa$-directed colimit is the colimit of a
$\kappa$-system in $\mathbf G$, $(A_i,f_{ij})_{I}$, the latter meaning
that each subset of $I$ of cardinality smaller than $\kappa$ has an
upper bound \cite[Definition 1.13]{AdamekRosicky}).

\begin{proposition} \label{p:ExistenceBigObjectsGrothendieck} Let
  $\mathbf G$ be a Grothendieck category and $\kappa$ an infinite
  regular cardinal.
  \begin{enumerate}
  \item Let $A$ be a non-zero $< \kappa$-generated object of
    $\mathbf G$. Then $A \prec A^{(\kappa)}$.

  \item For each non-zero object $A$ of $\mathbf G$, there exists an
    object $B$ of $\mathbf G$ such that $A \prec B$.
  \end{enumerate}
\end{proposition}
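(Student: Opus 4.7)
For part (1), the plan is to use the canonical coordinate maps. For each ordinal $\alpha < \kappa$, let $g_\alpha\colon A \to A^{(\kappa)}$ be the inclusion into the $\alpha$-th summand and $f_\alpha\colon A^{(\kappa)} \to A$ the corresponding projection, so that $f_\alpha g_\alpha = 1_A$. Set $E = \{(f_\alpha, g_\alpha) : \alpha < \kappa\}$; then $|E| = \kappa$ and condition (1) of Definition \ref{d:big} holds by construction. The content is entirely in condition (2): for any $\varphi\colon A \to A^{(\kappa)}$ one must show that $\{\alpha < \kappa : f_\alpha \varphi \neq 0\}$ has cardinality strictly less than $\kappa$.

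To achieve this I would express $A^{(\kappa)}$ as the colimit of its subobjects $A^{(S)} = \bigoplus_{\alpha \in S} A$, with $S$ running over subsets of $\kappa$ of cardinality less than $\kappa$, ordered by inclusion. This diagram is $\kappa$-directed, because $\kappa$ is regular (a family of fewer than $\kappa$ such subsets has union of cardinality less than $\kappa$, giving an upper bound), and the transition maps $A^{(S)} \hookrightarrow A^{(T)}$ for $S \subseteq T$ are split monomorphisms. Since $A$ is $<\kappa$-generated, $\Hom_{\mathbf G}(A,-)$ commutes with this colimit, so $\varphi$ factors through the inclusion $A^{(S)} \hookrightarrow A^{(\kappa)}$ for some $S$ with $|S| < \kappa$. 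For any $\alpha \notin S$ the composite $f_\alpha \varphi$ vanishes, so $\{\alpha : f_\alpha \varphi \neq 0\} \subseteq S$, whose cardinality is strictly less than $|E|$, as required.

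Part (2) then reduces to (1) via a standard feature of Grothendieck categories: every object is $<\kappa$-generated for some infinite regular cardinal $\kappa$ (by local presentability, or more concretely by choosing $\kappa$ larger than a bound on the number of generators needed for $A$ with respect to a fixed generator of $\mathbf G$). Given a non-zero $A$, pick such a $\kappa$ and apply (1) to obtain $A \prec A^{(\kappa)}$; then $B = A^{(\kappa)}$ does the job.

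The main subtlety, rather than a serious obstacle, is matching the paper's definition of $<\kappa$-generated (commutation with $\kappa$-directed colimits along monomorphisms) to the specific colimit diagram chosen above. This is precisely what regularity of $\kappa$ together with the fact that inclusions of partial coproducts are split monomorphisms deliver, so no heavier category-theoretic machinery should be required.
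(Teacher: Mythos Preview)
Your proof is correct and follows essentially the same approach as the paper: the paper also takes $E$ to be the set of coordinate projection/injection pairs and verifies condition (2) by writing $A^{(\kappa)}$ as a $\kappa$-directed colimit of partial coproducts along monomorphisms, so that the $<\kappa$-generation hypothesis forces any $\varphi$ to factor through one of them. The only cosmetic difference is that the paper indexes the colimit by the initial segments $A^{(\alpha)}$ for $\alpha<\kappa$ rather than by all subsets $S\subseteq\kappa$ with $|S|<\kappa$, and for part (2) it simply cites a reference for the fact that every object in a Grothendieck category is $<\kappa$-generated for some regular $\kappa$.
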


\begin{proof}
  {\em (1)} Denote by $\iota_\alpha\colon A \rightarrow A^{(\kappa)}$
  and $\pi_\alpha\colon A^{(\alpha)} \rightarrow A$ the injection and
  the projection corresponding to the $\alpha$-component of
  $A^{(\kappa)}$ for each $\alpha < \kappa$. Consider the subset
  $\{(\pi_\alpha,\iota_\alpha):\alpha < \kappa\}$ of
  $\Hom_{\mathbf G}\left(A^{(\kappa)},A\right) \times \Hom_{\mathbf
    G}\left(A,A^{(\kappa)}\right)$.
  Then $E$ satisfies \textit{(1)} of Definition \ref{d:big} since
  $\pi_\alpha \iota_\alpha = 1_A$ for each $\alpha < \kappa$.

  In order to prove condition {\em (2)} of Definition \ref{d:big},
  note that $A^{(\kappa)}$ is the colimit of the $\kappa$-direct
  system $(A^{(\alpha)},\iota_{\alpha\beta})_{\kappa}$, where
  $\iota_{\alpha\beta}\colon A^{(\alpha)} \rightarrow A^{(\beta)}$ is
  the inclusion for each $\alpha < \beta$ in $\kappa$. The colimit maps
  are the inclusions
  $\iota_\alpha\colon A^{(\alpha)} \rightarrow A^{(\kappa)}$ for each
  $\alpha < \kappa$. Let $\varphi\colon A \rightarrow A^{(\kappa)}$ be
  any morphism. Since $A$ is $< \kappa$-generated and the morphism
  $\iota_{\alpha\beta}$ is monic for every $\alpha < \beta$ in
  $\kappa$, there exists $\alpha_0 < \kappa$ and
  $\overline \varphi\colon A \rightarrow A^{(\alpha_0)}$ such that
  $\varphi = \iota_{\alpha_0}\overline \varphi$. In particular, we get
  that
  \[|\{(\pi_\alpha,\iota_\alpha)\colon \pi_\alpha\varphi \neq 0\}|
  \leq |\alpha_0| < \kappa = |E|.\]

  {\em (2)} Notice that, for each object $A$ in $\mathbf G$, there
  exists an infinite regular cardinal $\kappa$ such that $A$ is
  $< \kappa$-generated \cite[Lemma A.1]{Stovicek}.
\end{proof}

Using these results, we can characterize when $V \prec W$ for vector
spaces $V$ and $W$.

\begin{corollary}\label{p:BigVectorSpaces}
  Let $V$ and $W$ be two vector spaces over a field $K$. Then $V \prec W$ if and only if $W$ is infinite dimensional and
  $0 \neq \dim(V) < \dim(W)$.
\end{corollary}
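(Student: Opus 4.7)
The proof naturally splits into two implications, with most of the heavy lifting already done by the results of this section.

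For the necessity direction, I will start from $V \prec W$ and an infinite witnessing set $E$ from Definition~\ref{d:big}. Any pair $(f,g) \in E$ satisfies $fg = 1_V$, so $V$ is a retract of $W$ and hence $\dim(V) \leq \dim(W)$. From here I can rule out the three bad cases in turn: $V = 0$ makes $E$ collapse to the single pair $(0,0)$, contradicting $|E|$ infinite; $\dim(V) = \dim(W)$ forces $V \cong W$, whence $W \in \add(V)$ and Lemma~\ref{l:PropertiesOfBigObjects}(2) yields a contradiction; finally, if $W$ were finite-dimensional then $V$ would be finite-dimensional too, and since $V \neq 0$ every finite-dimensional $K$-space lies in $\add(V)$, again contradicting the same lemma.

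For the sufficiency direction, write $\lambda = \dim(V)$ and $\kappa = \dim(W)$. The plan is to interpose a power $V^{(\mu)}$ for a judiciously chosen infinite regular cardinal $\mu$ with $\lambda < \mu \leq \kappa$. Once $\mu$ is fixed with this property, Proposition~\ref{p:ExistenceBigObjectsGrothendieck}(1) gives $V \prec V^{(\mu)}$, because $\dim(V) < \mu$ makes $V$ a $<\mu$-generated object of $K$-Mod. On the other hand $\dim(V^{(\mu)}) = \mu \leq \kappa$, so $V^{(\mu)}$ embeds as a subspace and therefore as a direct summand of $W$. Lemma~\ref{l:PropertiesOfBigObjects}(1) then transports the strict inequality through this retract to yield $V \prec W$.

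The main obstacle, modest though it is, lies in producing the cardinal $\mu$. When $\lambda$ is finite one may take $\mu = \aleph_0$, since $\kappa$ is infinite by hypothesis. When $\lambda$ is infinite the choice $\mu = \lambda^+$ works: the cardinal successor is always regular, and $\lambda < \kappa$ forces $\lambda^+ \leq \kappa$. All remaining ingredients reduce to elementary linear algebra, namely that every subspace of a $K$-vector space splits off and that a $K$-vector space is $<\mu$-generated precisely when its dimension is strictly less than $\mu$.
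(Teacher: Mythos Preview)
Your argument is correct and follows essentially the same route as the paper: Lemma~\ref{l:PropertiesOfBigObjects}(2) for the necessity direction, and Proposition~\ref{p:ExistenceBigObjectsGrothendieck}(1) together with Lemma~\ref{l:PropertiesOfBigObjects}(1) via an intermediate power $V^{(\mu)}$ for the sufficiency. The only cosmetic differences are that the paper rules out $\dim(V)\geq\dim(W)$ by producing an epimorphism $V\to W$ rather than by invoking $W\in\add(V)$, and that it selects $\mu$ by casing on whether $\kappa$ is regular rather than on whether $\lambda$ is finite; your case split on $\lambda$ is in fact slightly cleaner, since taking $\mu=\lambda^+$ when $\lambda$ is finite and $\kappa$ is singular would not yield an infinite cardinal.
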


\begin{proof}
  Suppose $V \prec W$. First of all, note that
  $\textrm{dim}(V)<\textrm{dim}(W)$ since, otherwise, there would
  exist an epimorphism $\varphi\colon V \rightarrow W$. This would
  imply that, for any subset $E$ of $\Hom_K(W,V) \times \Hom_K(V,W)$,
  $\{(f,g) \in E:f\varphi \neq 0\} = E$. That is, $V\not\prec W$. Furthermore, $W$ has to be infinite dimensional,
  since finite dimensional vector spaces belong to $\add(V)$ and, by
  Lemma \ref{l:PropertiesOfBigObjects}, for any vector space $W$ in $\add(V)$, we have that $V\not\prec W$.

  Conversely, suppose that $W$ is infinite dimensional and that
  $0 \neq \dim(V) < \dim(W)$. Set $\textrm{dim }W = \kappa$ and
  $\textrm{dim }V = \lambda$ and take an infinite regular cardinal
  $\mu$ with $\lambda < \mu \leq \kappa$ (if $\kappa$ is regular, take
  $\mu = \kappa$; otherwise, set $\mu = \lambda^+$, the successor
  cardinal of $\lambda$). By Proposition
  \ref{p:ExistenceBigObjectsGrothendieck} and Lemma
  \ref{l:PropertiesOfBigObjects},
  $V \prec V^{(\mu)} \prec V^{(\mu)} \oplus V^{(\kappa)}$. Since
  $\lambda < \mu \leq \kappa$,
  $\dim\left(V^{(\mu)} \oplus V^{(\kappa)}\right)  = \kappa$ and $V^{(\mu)} \oplus V^{(\kappa)} \cong W$. Consequently, $V \prec W$.
\end{proof}

Let $R$ be a ring and $A$ and $B$ right $R$-modules. If $A \prec B$ and $E$ is the set of Definition \ref{d:big}, then,
for each $(f,g) \in E$, $\im g$ is a direct summand of $B$ isomorphic
to $A$. That is, $B$ contains many direct summands isomorphic to
$A$. In view of the preceding result, a natural question arises: is
$B$ isomorphic to a direct sum of copies of $A$? The following example
shows that the answer to this question is negative in general.

\begin{example}
  Let $\kappa$ be an infinite regular cardinal and consider the
  abelian group $M=\mathbb Z^{(\kappa)} \oplus \frac{\mathbb Z}{2
    \mathbb Z}$. Then $\mathbb Z \prec M$ by Proposition
  \ref{p:ExistenceBigObjectsGrothendieck} and Lemma
  \ref{l:PropertiesOfBigObjects}, while $M$ is not isomorphic to a
  direct sum of copies of $\mathbb Z$ since it is not free. 
\end{example}

\section{MAXIMAL IDEALS}
\label{sec:maximal-ideals}

In this section, using the order $\preceq$, we prove that there do not exist maximal ideals in
Grothendieck categories. We will prove a more general result: if
$\mathbf C$ is a preadditive category such that there is no
non-zero maximal object with respect to $\preceq$, then
$\mathbf C$ does not have maximal ideals. The proof is based on the
following theorem:

\begin{theorem}\label{t:MainTheorem}
  Let $\mathbf C$ be a preadditive category, $A$ and $B$ objects of
  $\mathbf C$ such that $A \prec B$, and $I$ a proper
  ideal of $\End_{\mathbf C}(A)$. Then $\mathcal A_I(B,B)$ is a proper
  ideal of $\End_{\mathbf C}(B)$ which is not maximal.
\end{theorem}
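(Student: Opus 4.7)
The plan is to establish the two assertions separately: first that $\mathcal A_I(B,B)$ is proper, and then that it is strictly contained in some other proper ideal of $\edc(B)$. Both arguments begin by fixing a single pair $(f_0,g_0) \in E$, where $E \subseteq \homc(B,A) \times \homc(A,B)$ is the infinite set witnessing $A \prec B$ from Definition~\ref{d:big}.

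For \emph{properness}, take $\alpha = g_0$ and $\beta = f_0$ in the defining condition of $\mathcal A_I(B,B)$: if $1_B$ belonged to $\mathcal A_I(B,B)$, then $f_0 \cdot 1_B \cdot g_0 = f_0 g_0 = 1_A$ would lie in $I$, contradicting the properness of $I$.

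For \emph{non-maximality}, I would exhibit a proper ideal of $\edc(B)$ strictly containing $\mathcal A_I(B,B)$. Consider the idempotent $e := g_0 f_0 \in \edc(B)$; the same computation $f_0\, e\, g_0 = 1_A \notin I$ shows $e \notin \mathcal A_I(B,B)$. Let $\mathcal J$ be the two-sided ideal of $\edc(B)$ generated by $\mathcal A_I(B,B) \cup \{e\}$, a typical element being of the form $h + \sum_{i=1}^n p_i\, e\, q_i$ with $h \in \mathcal A_I(B,B)$ and $p_i, q_i \in \edc(B)$. To show $\mathcal J$ is still proper, I assume for contradiction that $1_B = h + \sum_{i=1}^n p_i\, e\, q_i$; sandwiching this identity between $f$ and $g$ for a generic $(f,g) \in E$ yields
$$1_A = fg = fhg + \sum_{i=1}^n (f\, p_i\, g_0)(f_0\, q_i\, g).$$
The term $fhg$ lies in $I$ because $h \in \mathcal A_I(B,B)$. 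For the remaining sum the crucial observation is that each $\varphi_i := p_i g_0 \in \homc(A,B)$ is fixed, independent of $(f,g)$; condition~(2) of Definition~\ref{d:big} then gives $|\{(f,g) \in E : f\varphi_i \neq 0\}| < |E|$ for every $i$. The union of these $n$ bad subsets still has cardinality strictly less than the infinite cardinal $|E|$, so some $(f,g) \in E$ satisfies $f\varphi_i = 0$ for all $i$ simultaneously. For that choice the cross terms vanish and $1_A = fhg \in I$, contradicting the properness of $I$.

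The main obstacle I foresee is choosing the right element to adjoin and recognising the role of condition~(2) of Definition~\ref{d:big}. One might look for more elaborate enlargements, but the simple $e = g_0 f_0$ already does the job, because after sandwiching, every cross term factors through a \emph{fixed} morphism $p_i g_0 \in \homc(A,B)$, and condition~(2) is precisely tailored to annihilate the finitely many such morphisms at once on an $(f,g)$ drawn from the large complementary subset of $E$.
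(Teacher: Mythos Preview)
Your argument is correct and follows essentially the same route as the paper. The only difference is cosmetic: the paper enlarges $\mathcal A_I(B,B)$ by the two-sided ideal $J$ generated by \emph{all} endomorphisms of $B$ factoring through $A$, whereas you adjoin only the single idempotent $e=g_0f_0$. Since $e\in J$, your ideal $\mathcal J$ sits inside the paper's $J+\mathcal A_I(B,B)$, so properness of the latter would in fact follow from yours. In both cases the heart of the matter is identical: after writing an element as $h+\sum(\text{terms factoring through }A)$ and sandwiching by a pair $(f,g)\in E$, condition~(2) of Definition~\ref{d:big} lets you choose $(f,g)$ killing the finitely many fixed morphisms $A\to B$ that appear, forcing $1_A\in I$.
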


\begin{proof}
  Since $A \prec B$, there exists
  $E \subseteq \Hom_{\mathbf C}(B,A) \times \Hom_{\mathbf C}(A,B)$
  satisfying the conditions of Definition \ref{d:big}. Let $J$ be the
  ideal of $\End_{\mathbf C}(B)$ generated by the set of all the
  endomorphisms of $B$ that factors through $A$. We claim that
  $J+\mathcal A_I(B,B)$ is a proper ideal of $\End_{\mathbf C}(B)$
  strictly containing $\mathcal A_I(B,B)$.
  
  First of all, note that $\mathcal A_I(B,B)$ is not equal to
  $J+\mathcal A_I(B,B)$. In fact, for each $(f,g) \in E$, $g f$ is an
  element of $J$ not belonging to $\mathcal A_I(B,B)$, since
  $f g f g = 1_A\notin I$ because $I$ is proper.

  Now we will prove that $J+\mathcal A_I(B,B)$ is a proper ideal. Fix
  any element $\psi \in J+\mathcal A_I(B,B)$. Let $\varphi \in J$ and
  $\varphi' \in \mathcal A_I(B,B)$ be such that
  $\psi=\varphi+\varphi'$. Since $\varphi \in J$,
  $\varphi=\sum_{i=1}^nf_ig_i$ for morphisms
  $f_1, \ldots, f_n \in \Hom_{\mathbf C}(A,B)$ and
  $g_1, \ldots, g_n \in \Hom_{\mathbf C}(B,A)$. The set
  $\{(f,g) \in E:f \psi g \notin I\}$ is contained in the set
  $\{(f,g) \in E:f \varphi \neq 0\}$, which is contained in
  \begin{displaymath}
    \bigcup_{i=1}^n \{(f,g) \in E:ff_i \neq 0\}.
  \end{displaymath}
  Since $A \prec B$ and $E$ is infinite, this set has
  cardinality smaller than $|E|$. The conclusion is that, for each
  $\psi \in J+\mathcal A_I(B,B)$, the set
  $\{(f,g) \in E: f\psi g \notin I\}$ has cardinality smaller than
  $|E|$. But this implies that $1_{B}$ does not belong to
  $J+\mathcal A_I(B,B)$, as $\{(f,g) \in E:f 1_{B} g \notin I\} =
  E$. Consequently, $J+\mathcal A_I(B,B)$ is a proper ideal.
\end{proof}

This theorem has a number of consequences.

\begin{corollary}\label{c:MaximalInPreadditive}
  Let $\mathbf C$ be a preadditive category such that there do not
  exist non-zero maximal objects with respect to $\preceq$. Then
  $\mathbf C$ does not have maximal ideals.
\end{corollary}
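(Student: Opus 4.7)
The plan is to argue by contradiction: suppose $\mathcal M$ is a maximal ideal of $\mathbf C$ and derive the existence of a proper ideal strictly containing it.

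The first step is to reveal the ``shape'' of $\mathcal M$. Since $\mathcal M$ is proper, there exist objects $X,Y$ and a morphism $f\in\homc(X,Y)\setminus\mathcal M(X,Y)$. If $1_Y$ belonged to $\mathcal M(Y,Y)$, the ideal property would force $f=1_Y f\in\mathcal M(X,Y)$, a contradiction. So $I:=\mathcal M(Y,Y)$ is a proper two-sided ideal of $\End_{\mathbf C}(Y)$, and $Y$ is non-zero in the sense that $1_Y\neq 0$ (otherwise $\End_{\mathbf C}(Y)$ would be the zero ring, which has no proper ideals). Setting $A=Y$, the defining property of the associated ideal yields $\mathcal M\subseteq\mathcal A_I$, and $\mathcal A_I$ is proper since $\mathcal A_I(A,A)\subseteq I$. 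Maximality of $\mathcal M$ therefore gives $\mathcal M=\mathcal A_I$.

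The second step invokes the hypothesis and Theorem \ref{t:MainTheorem}. Because $A$ is non-zero and not maximal with respect to $\preceq$, there is an object $B$ with $A\prec B$; Theorem \ref{t:MainTheorem} then provides a proper, non-maximal ideal $\mathcal A_I(B,B)$ of $\End_{\mathbf C}(B)$, so there exists a proper ideal $J'$ of $\End_{\mathbf C}(B)$ with $\mathcal A_I(B,B)\subsetneq J'\subsetneq\End_{\mathbf C}(B)$. Using $\mathcal M=\mathcal A_I$, this reads $\mathcal M(B,B)\subsetneq J'$.

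The final step lifts this strict containment from $\End_{\mathbf C}(B)$ back to the category. Consider the associated ideal $\mathcal A_{J'}$. It is proper, because $\mathcal A_{J'}(B,B)\subseteq J'\subsetneq\End_{\mathbf C}(B)$. It contains $\mathcal M$ by the universal property of associated ideals, since $\mathcal M(B,B)\subseteq J'$. Finally, because $J'$ is a two-sided ideal, taking $\alpha=\beta=1_B$ in the defining formula shows $\mathcal A_{J'}(B,B)=J'$, so $\mathcal A_{J'}(B,B)=J'\supsetneq\mathcal M(B,B)$, which forces the inclusion $\mathcal M\subsetneq\mathcal A_{J'}$ to be strict. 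This contradicts the maximality of $\mathcal M$.

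The main obstacle is the first step, which requires recognising that every maximal ideal of $\mathbf C$ is of the form $\mathcal A_I$ for some proper two-sided ideal $I$ of the endomorphism ring of a well-chosen (necessarily non-zero) object $A$. Once this normal form is secured, Theorem \ref{t:MainTheorem} does the genuine work and the $\mathcal A_{(-)}$ construction mechanically transports the resulting non-maximality from $\End_{\mathbf C}(B)$ back up to $\mathbf C$.
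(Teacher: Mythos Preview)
Your proof is correct and follows essentially the same line as the paper's: pick an object $A$ where the ideal is proper on endomorphisms, use the hypothesis to find $B$ with $A\prec B$, and invoke Theorem~\ref{t:MainTheorem} to see that the ideal cannot be maximal at $B$. The only difference is in the final step: the paper simply cites \cite[Lemma~2.4]{FacchiniPerone} to pass from ``$\mathcal I(B,B)$ is not maximal in $\End_{\mathbf C}(B)$'' to ``$\mathcal I$ is not maximal in $\mathbf C$'', whereas you unpack this by first normalising $\mathcal M=\mathcal A_I$ and then explicitly building the strictly larger proper ideal $\mathcal A_{J'}$. Your argument is thus a self-contained version of what the paper obtains by reference; note also that the paper does not actually need the equality $\mathcal M=\mathcal A_I$, only the containment $\mathcal I(B,B)\subseteq\mathcal A_I(B,B)$.
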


\begin{proof}
  Let $\mathcal I$ be any proper ideal in $\mathbf C$. Then there
  exists an object $A$ such that
  $\mathcal I(A,A) \neq \End_{\mathbf C}(A)$. Set $I=\mathcal
  I(A,A)$.
  Let $B$ be an object such that $A \prec B$. Then
  $\mathcal I(B,B) \subseteq \mathcal A_I(B,B)$ which, as a
  consequence of the previous result, is properly contained in a
  proper ideal of $\End_{\mathbf C}(B)$. This means that
  $\mathcal I(B,B)$ is not a maximal ideal of $\End_{\mathbf C}(B)$,
  and $\mathcal I$ is not a maximal ideal in $\mathbf C$ by
  \cite[Lemma 2.4]{FacchiniPerone}.
\end{proof}

Combining this result with Proposition
\ref{p:ExistenceBigObjectsGrothendieck}, we obtain that maximal
ideals do not exist in any
Grothendieck category (in particular, in any module category).

\begin{corollary}
  Let $\mathbf G$ be a Grothendieck category. Then there do not exist
  maximal ideals in $\mathbf G$.
\end{corollary}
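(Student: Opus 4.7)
The plan is to derive this as an immediate combination of the two main results already established in the excerpt. Corollary \ref{c:MaximalInPreadditive} tells us that a preadditive category with no non-zero maximal objects under $\preceq$ cannot have maximal ideals, so my whole task is to verify the hypothesis of that corollary for a Grothendieck category $\mathbf G$.

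First I would invoke Proposition \ref{p:ExistenceBigObjectsGrothendieck}(2), which asserts precisely that for every non-zero object $A$ of $\mathbf G$ there exists an object $B$ with $A \prec B$. In other words, no non-zero object of $\mathbf G$ is maximal with respect to $\preceq$. This is exactly the hypothesis required by Corollary \ref{c:MaximalInPreadditive}.

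Then I would apply Corollary \ref{c:MaximalInPreadditive} to conclude that $\mathbf G$ has no maximal ideals. There is essentially no obstacle here: all the work has been done in proving Theorem \ref{t:MainTheorem}, its consequence Corollary \ref{c:MaximalInPreadditive}, and the construction in Proposition \ref{p:ExistenceBigObjectsGrothendieck} (whose proof relies on the fact, recalled from \cite{Stovicek}, that every object of a Grothendieck category is $<\kappa$-generated for some infinite regular cardinal $\kappa$, together with the explicit verification that $A \prec A^{(\kappa)}$). The corollary is then just a one-line composition of these two facts, so the proof is a pure citation argument with nothing further to check.
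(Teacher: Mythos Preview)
Your proposal is correct and matches the paper's own proof exactly: the corollary is obtained by combining Corollary~\ref{c:MaximalInPreadditive} with Proposition~\ref{p:ExistenceBigObjectsGrothendieck}(2), precisely as you describe. There is nothing further to add.
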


Another remarkable consequence of Theorem \ref{t:MainTheorem} is the
following.

\begin{corollary}\label{c:MaximalIdealsInBigObjects}
  Let $\mathbf C$ be a preadditive category, $\mathcal M$ a maximal
  ideal of $\mathbf C$ and $A$ an object of $\mathbf C$. If
  $\mathcal M(A,A) \neq
  \End_R(A)$, then $A$ is maximal with respect to $\preceq$.
\end{corollary}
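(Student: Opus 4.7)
The plan is to argue by contradiction. Suppose $A$ is not maximal with respect to $\preceq$; then there exists an object $B$ of $\mathbf C$ with $A \prec B$. Setting $I := \mathcal M(A,A)$, which is a proper ideal of $\End_{\mathbf C}(A)$ by hypothesis, Theorem \ref{t:MainTheorem} immediately yields that $\mathcal A_I(B,B)$ is a proper ideal of $\End_{\mathbf C}(B)$ which is \emph{not} maximal. This does the real work.

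The next step is to relate $\mathcal M$ itself to $\mathcal A_I$ as ideals of the category $\mathbf C$. Because $\mathcal M(A,A) = I$, the general observation recorded in Section~\ref{sec:preliminaries} (that $\mathcal A_I$ contains every ideal $\mathcal J$ of $\mathbf C$ with $\mathcal J(A,A) \subseteq I$) gives the inclusion $\mathcal M \subseteq \mathcal A_I$. Moreover $\mathcal A_I$ is itself a proper ideal of $\mathbf C$, since its $(A,A)$-component is exactly $I$, which is proper. By the maximality of $\mathcal M$ the inclusion must be an equality, so in particular $\mathcal M(B,B) = \mathcal A_I(B,B)$.

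We now conclude with the same appeal as in Corollary \ref{c:MaximalInPreadditive}: since $\mathcal M(B,B) = \mathcal A_I(B,B)$ is a proper but non-maximal ideal of $\End_{\mathbf C}(B)$, \cite[Lemma~2.4]{FacchiniPerone} forces $\mathcal M$ not to be a maximal ideal of $\mathbf C$, contradicting the hypothesis on $\mathcal M$. Hence no such $B$ exists and $A$ is maximal with respect to $\preceq$.

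The only nontrivial ingredient is Theorem \ref{t:MainTheorem}; the rest is routine bookkeeping about ideals associated to ideals of endomorphism rings. The mild obstacle is simply checking at the right moment that $\mathcal A_I$ is proper and that $\mathcal M$ sits inside it, so that maximality of $\mathcal M$ can be applied to transfer the non-maximality from $\End_{\mathbf C}(B)$ back to $\mathbf C$.
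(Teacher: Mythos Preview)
Your proof is correct and follows essentially the same path as the paper's. The paper is a bit terser: it obtains $\mathcal M(B,B)=\mathcal A_I(B,B)$ by a single citation of \cite[Lemma~2.4]{FacchiniPerone} (which already encodes the inclusion-plus-maximality argument you spell out), and then observes that this contradicts maximality of $\mathcal M$; you instead unpack the equality $\mathcal M=\mathcal A_I$ by hand and invoke \cite[Lemma~2.4]{FacchiniPerone} only at the end, but the logical content is the same.
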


\begin{proof}
  Set $I=\mathcal M(A,A)$. Suppose that there exists an object $B$
  such that $A \prec B$. Then $\mathcal M(B,B) = \mathcal A_I(B,B)$ by
  \cite[Lemma 2.4]{FacchiniPerone}. But, by Theorem~\ref{t:MainTheorem},
  $\mathcal A_I(B,B)$ is a proper ideal that is not maximal. This
  contradicts the maximality of $\mathcal M$.
\end{proof}

\begin{remark} \label{r:MaximalInBigObjects} {\rm Notice that,
    equivalently, if $A$ is an object of a preadditive category
    $\mathbf C$ and there exists an object $B$ with $A\prec B$, then
    $\mathcal M(A,A) = \End_\mathbf{C}(A)$ for every maximal ideal
    $\mathcal M$ in $\mathbf C$.}
\end{remark}

As a consequence of Corollary \ref{c:MaximalInPreadditive}, if a
preadditive category $\mathbf C$ has maximal ideals, then there exist
maximal objects with respect to the partial order $\preceq$. However, not all objects have to be
maximal. That is, there can exist objects $A$ for which there are 
objects $B$ with $A\prec B$. For example, let $\kappa$ a cardinal and
$\kappa^+$ be its successor cardinal. Let $K$ be a field and $\mathbf C$
the full subcategory of Mod-$K$ whose objects are all vector spaces
of dimension smaller than $\kappa^+$. As is proved in \cite[Example
4.1]{FacchiniPerone}, $\mathbf C$ has one maximal ideal. However, for
each vector space $M$ of dimension smaller than $\kappa$, there
exists spaces $V$ with $M\prec V$ by Proposition
\ref{p:BigVectorSpaces}.

Despite this observation, we are going to see that, in order to
determine if a preadditive category has maximal ideals, we can
restrict our attention to a full subcategory in which each object is
maximal with respect to $\preceq$.

\begin{definition}
  Let $\mathbf C$ be a preadditive category.
  \begin{enumerate}
  \item We will denote by $\mathbf M(\mathbf C)$ the full subcategory
    of $\mathbf C$ consisiting of all maximal objects with respec to
    $\preceq$, that is,
    \begin{displaymath}
      \{\,C \in \mathbf C: \textrm{ there does not exist } A \in \mathbf{C} \textrm{ with }C\prec A\,\}
    \end{displaymath}

  \item We will denote by $\mathbf S(\mathbf C)$ the full subcategory
    of $\mathbf C$ whose class of objects is
    \begin{displaymath}
      \{C \in \mathbf C: \textrm{there exists } A \in \mathbf
      M(\mathbf{C}) \textrm{ with }C \prec A\}
    \end{displaymath}
  \end{enumerate}
\end{definition}

Let $\mathbf C$ be a preadditive category and $\mathbf D$ a full
subcategory of $\mathbf C$. We now define how to restrict an ideal of
$\mathbf C$ to $\mathbf D$ and, conversely, how to extend a maximal
ideal of $\mathbf D$ to $\mathbf C$.

\begin{definition}
  Let $\mathbf C$ be a preadditive category and $\mathbf D$ a full
  subcategory of $\mathbf C$.
  \begin{enumerate}
  \item Given $\mathcal I$ an ideal of $\mathbf C$, define its
    restriction $\mathcal I^{\re}$ to $\mathbf D$ by
    \begin{displaymath}
      \mathcal I^{\re}(D,D') = \mathcal I(D,D')
    \end{displaymath}
    for every $D,D' \in \mathbf D$.

  \item Given any maximal ideal $\mathcal M$ of $\mathbf D$, there
    exists an object $D \in \mathbf D$ such that
    $\mathcal M(D,D) \neq \End_{\mathbf D}(D)$. Define the
    extension $\mathcal M^{\ex}$ of $\mathcal M$ to $\mathbf C$ to be
    the ideal of $\mathbf C$ associated to $\mathcal M(D,D)$.
  \end{enumerate}
\end{definition}

\begin{lemma}\label{l:DefinitionExtension}
  Let $\mathbf C$ be a preadditive category, $\mathbf D$ a full
  subcategory of $\mathbf C$ and $\mathcal M$ a maximal ideal in
  $\mathbf D$.
  \begin{enumerate}
  \item Let $D$ and $D'$ be objects of $\mathbf D$ such that
    $I:=\mathcal M(D,D)$ and $I':=\mathcal M(D',D')$ are maximal
    ideals of $\End_{\mathbf C}(D)$ and $\End_{\mathbf C}(D')$
    respectively. Then the ideals $\mathcal A_I$ and $\mathcal A_{I'}$
    coincide in $\mathbf C$. In particular, the definition of
    $\mathcal M^{\ex}$ does not depend on the choice of the object $D$
    with $\mathcal M(D,D) \neq \End_{\mathbf D}(D)$.

  \item For any objects $D,D'$ of $\mathbf D$,
    $\mathcal M^{\ex}(D,D') = \mathcal M(D,D')$.
  \end{enumerate}

\end{lemma}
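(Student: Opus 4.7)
The plan is to reduce both parts to a single observation: for all $X, Y \in \mathbf D$, $\mathcal A_I(X,Y) = \mathcal M(X, Y)$, where the associated ideal $\mathcal A_I$ is computed in the ambient category $\mathbf C$. Once this is in hand, both statements follow quickly.

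To establish the observation, I would first apply \cite[Lemma~2.4]{FacchiniPerone} inside $\mathbf D$: since $\mathcal M$ is a maximal ideal of $\mathbf D$ with $\mathcal M(D,D) = I$ a proper (hence maximal) ideal of $\End_{\mathbf D}(D)$, that lemma gives $\mathcal M = \mathcal A_I^{\mathbf D}$, where the superscript means that the associated ideal is computed within $\mathbf D$. Because $\mathbf D$ is full in $\mathbf C$, the hom-sets used to form $\mathcal A_I^{\mathbf D}(X,Y)$ and $\mathcal A_I(X,Y)$ coincide whenever the source and target lie in $\mathbf D$; hence $\mathcal A_I(X,Y) = \mathcal M(X,Y)$ for all $X, Y \in \mathbf D$, and the same identity holds with $I'$ in place of $I$.

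For part (1), specializing on the diagonal gives $\mathcal A_I(D',D') = \mathcal M(D',D') = I'$ and $\mathcal A_{I'}(D,D) = \mathcal M(D,D) = I$. Invoking the maximality property recorded in the preliminaries — that $\mathcal A_J$ contains every ideal $\mathcal K$ of $\mathbf C$ with $\mathcal K(E,E) \subseteq J$ at the relevant object $E$ — the first equality yields $\mathcal A_I \subseteq \mathcal A_{I'}$ on all of $\mathbf C$, and the second yields $\mathcal A_{I'} \subseteq \mathcal A_I$, so the two ideals agree; in particular, $\mathcal M^{\ex}$ is independent of the chosen object. For part (2), taking $I = \mathcal M(D_0,D_0)$ with whichever $D_0 \in \mathbf D$ was used to define $\mathcal M^{\ex}$, the observation immediately gives $\mathcal M^{\ex}(D,D') = \mathcal A_I(D,D') = \mathcal M(D,D')$ for all $D, D' \in \mathbf D$.

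The only delicate point is the interplay between the two associated-ideal constructions — inside $\mathbf D$ versus inside $\mathbf C$ — for the same two-sided ideal of the common endomorphism ring $\End_{\mathbf D}(D) = \End_{\mathbf C}(D)$; this is precisely where fullness of $\mathbf D$ enters, and I would write out the defining formulas there to check that no hom-sets are silently enlarged when passing from $\mathbf D$ to $\mathbf C$. Everything else is formal manipulation with the universal property of associated ideals.
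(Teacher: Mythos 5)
Your proposal is correct and follows essentially the same route as the paper: apply \cite[Lemma 2.4]{FacchiniPerone} to get $\mathcal M=\mathcal A_I$ inside $\mathbf D$, deduce $\mathcal A_I(D',D')\subseteq I'$ and symmetrically $\mathcal A_{I'}(D,D)\subseteq I$, and conclude via the fact that $\mathcal A_J$ is the largest ideal whose value at the relevant object lies in $J$. Your explicit remark that fullness of $\mathbf D$ is what makes the associated ideal computed in $\mathbf D$ agree with the one computed in $\mathbf C$ on objects of $\mathbf D$ is a point the paper leaves implicit, but it is the same argument.
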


\begin{proof}
  {\em (1)} By \cite[Lemma 2.4]{FacchiniPerone},
  $\mathcal M=\mathcal A_I$ in $\mathbf D$. Then
  $\mathcal A_I(D',D') \subseteq I'$, which implies that
  $\mathcal A_I$ is contained in $\mathcal A_{I'}$ (in $\mathbf
  C$).
  Using the same argument, $\mathcal M=\mathcal A_{I'}$ in $\mathbf D$
  and, consequently, $\mathcal A_{I'}(D,D) \subseteq I$. Thus
  $\mathcal A_{I'}$ is contained in $\mathcal A_I$ (in $\mathbf C$).

  {\em (2)} By \cite[Lemma 2.4]{FacchiniPerone}.
\end{proof}

Now we can establish, for any preadditive category $\mathbf C$, the
relation between the maximal ideals of $\mathbf C$ and those of
$\mathbf M(\mathbf C)$.

\begin{theorem}\label{p:MaximalMC}
  Let $\mathbf C$ be a preadditive category. Then the assignments
  $\mathcal M \mapsto \mathcal M^{\re}$ and
  $\mathcal M \mapsto \mathcal M^{\ex}$ define bijective
  correspondences between the following classes of ideals:
  \begin{enumerate}
  \item Maximal ideals of $\mathbf C$.

  \item Maximal ideals $\mathcal M$ of $\mathbf M(\mathbf C)$
    satisfying $\mathcal M^{\ex}(C,C)=\End_{\mathbf C}(C)$ for each
    object $C$ not belonging to
    $\mathbf M(\mathbf C) \cup \mathbf S(\mathbf C)$, that is, for each
    object $C$ with no maximal $N$ with $C\preceq N$.
  \end{enumerate}
\end{theorem}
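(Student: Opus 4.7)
The plan is to reduce everything to Lemma~2.4 of~\cite{FacchiniPerone}, which says that any maximal ideal $\mathcal M$ of a preadditive category equals $\mathcal A_I$ for the maximal two-sided ideal $I=\mathcal M(D,D)$ of $\End_{\mathbf C}(D)$, where $D$ is any object with $\mathcal M(D,D)\neq\End_{\mathbf C}(D)$; by Corollary~\ref{c:MaximalIdealsInBigObjects}, any such $D$ must lie in $\mathbf M(\mathbf C)$. Since $\mathbf M(\mathbf C)$ is a full subcategory, the associated ideal $\mathcal A_I$ can be computed indifferently in $\mathbf C$ or in $\mathbf M(\mathbf C)$ on pairs of objects of $\mathbf M(\mathbf C)$, and it is this compatibility that drives the whole argument.

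For the map $\mathcal M\mapsto\mathcal M^{\re}$, observe first that $\mathcal M^{\re}$ is proper since $\mathcal M^{\re}(D,D)=I$, and that it coincides with $\mathcal A_I$ inside $\mathbf M(\mathbf C)$. To show maximality, assume $\mathcal J\supsetneq\mathcal M^{\re}$ is a proper ideal of $\mathbf M(\mathbf C)$, pick $A\in\mathbf M(\mathbf C)$ with $\mathcal J(A,A)\neq\End_{\mathbf C}(A)$, and pick a witness $f\in\mathcal J(X,Y)\setminus\mathcal M(X,Y)$. Lemma~2.4 applied to $\mathcal M$ at $A$ forces $\mathcal M(A,A)$ to be maximal in $\End_{\mathbf C}(A)$ and $\mathcal J(A,A)=\mathcal M(A,A)$, while $f\notin\mathcal A_{\mathcal M(A,A)}(X,Y)$ yields $\alpha\colon A\to X$ and $\beta\colon Y\to A$ with $\beta f\alpha\notin\mathcal M(A,A)$, contradicting $\beta f\alpha\in\mathcal J(A,A)$. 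The side condition in (2) then follows because Remark~\ref{r:MaximalInBigObjects} gives $\mathcal M(C,C)=\End_{\mathbf C}(C)$ for every $C\notin\mathbf M(\mathbf C)$, together with the identity $(\mathcal M^{\re})^{\ex}=\mathcal M$ (immediate from Lemma~2.4).

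The main obstacle is the maximality of the extension. For $\mathcal N$ maximal in $\mathbf M(\mathbf C)$ satisfying the hypothesis in (2), write $\mathcal N^{\ex}=\mathcal A_{I_0}$ with $I_0=\mathcal N(D_0,D_0)$; propriety is immediate. Suppose, for contradiction, that some proper ideal $\mathcal J$ of $\mathbf C$ strictly contains $\mathcal N^{\ex}$, and consider the restriction $\mathcal J^{\re}$ to $\mathbf M(\mathbf C)$. If $\mathcal J^{\re}=\mathcal N$, any $f\in\mathcal J(X,Y)\setminus\mathcal A_{I_0}(X,Y)$ must have $X$ or $Y$ outside $\mathbf M(\mathbf C)$ (otherwise $f\in\mathcal J^{\re}(X,Y)=\mathcal N(X,Y)=\mathcal A_{I_0}(X,Y)$), and supplies $\alpha\colon D_0\to X$, $\beta\colon Y\to D_0$ with $\beta f\alpha\in\mathcal J(D_0,D_0)\setminus I_0$, contradicting $\mathcal J(D_0,D_0)=\mathcal N(D_0,D_0)=I_0$. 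If instead $\mathcal J^{\re}\supsetneq\mathcal N$, maximality of $\mathcal N$ gives $1_A\in\mathcal J(A,A)$ for every $A\in\mathbf M(\mathbf C)$; for $C\in\mathbf S(\mathbf C)$ any pair $(f,g)$ witnessing $C\prec A$ with $A\in\mathbf M(\mathbf C)$ yields $1_C=fg=f\,1_A\,g\in\mathcal J(C,C)$, and for $C\notin\mathbf M(\mathbf C)\cup\mathbf S(\mathbf C)$ the hypothesis in (2) supplies $1_C\in\mathcal N^{\ex}(C,C)\subseteq\mathcal J(C,C)$, forcing $\mathcal J=\homc$, a contradiction. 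Finally, the identities $(\mathcal M^{\re})^{\ex}=\mathcal M$ and $(\mathcal N^{\ex})^{\re}=\mathcal N$, and hence bijectivity, follow from Lemma~2.4 together with the restriction compatibility of $\mathcal A_I$.
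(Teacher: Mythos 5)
Your proof is correct and follows essentially the same strategy as the paper: reduce everything to \cite[Lemma 2.4]{FacchiniPerone} and Theorem \ref{t:MainTheorem} (via Corollary \ref{c:MaximalIdealsInBigObjects}) to locate the witness object in $\mathbf M(\mathbf C)$, then compare a putative larger ideal of $\mathbf C$ with its restriction to $\mathbf M(\mathbf C)$. The only, harmless, deviations are that you verify maximality of $\mathcal M^{\re}$ explicitly where the paper calls it trivial, and that for $C\in\mathbf S(\mathbf C)$ you transport $1_A$ to $1_C$ directly through a splitting pair with $fg=1_C$ instead of first establishing the paper's intermediate claim that $\mathcal M^{\ex}(C,C)=\End_{\mathbf C}(C)$ on all of $\mathbf S(\mathbf C)$.
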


\begin{proof}
  Let $\mathcal M$ be a maximal ideal of $\mathbf C$. We will now
  prove that $\mathcal M^{\re}$ is a maximal ideal of
  $\mathbf M(\mathbf C)$ satisfying
  $\mathcal M^{\re\ex}(C,C)=\End_{\mathbf C}(C)$ for each object $C$
  not belonging to $\mathbf M(\mathbf C) \cup \mathbf S(\mathbf
  C)$.
  Since $\mathcal M$ is proper, there exists an object $C_0$ of
  $\mathbf C$ such that
  $\mathcal M(C_0,C_0) \neq \End_{\mathbf C}(C_0)$. By Theorem
  \ref{t:MainTheorem}, $C_0$ must belong to $\mathbf M(\mathbf
  C)$.
  This means that $\mathcal M^{\re}$ is a proper ideal in
  $\mathbf M(\mathbf C)$, which is trivially maximal, as $\mathcal M$
  is maximal in $\mathbf C$. Moreover, note that $\mathcal M^{\re\ex}$
  is the ideal of $\mathbf C$ associated to $\mathcal M(C_0,C_0)$,
  which is equal to $\mathcal M$ by \cite[Lemma
  2.4]{FacchiniPerone}. Then, again by Theorem \ref{t:MainTheorem},
  $\mathcal M^{\re\ex}(C,C) = \mathcal M(C,C) = \End_{\mathbf C}(C)$
  for every object $C$ not belonging to $\mathbf M(\mathbf C)$.

  Conversely, let $\mathcal M$ be a maximal ideal of
  $\mathbf M(\mathbf C)$ satisfying
  $\mathcal M^{\ex}(C,C) = \End_{\mathbf C}(C)$ for each object $C$
  not belonging to $\mathbf M(\mathbf C) \cup \mathbf S(\mathbf
  C)$.
  We claim that $\mathcal M^{\ex}(C,C)=\End_{\mathbf C}(C)$ for each
  object $C$ belonging to $\mathbf S(\mathbf C)$. To prove the claim,
  let $C $ be an object of $\mathbf S(\mathbf C)$ and suppose
  that $D \in \mathbf M(\mathbf C)$ satisfies $C \prec D$. If
  $\mathcal M^{\ex}(C,C) \neq \End_{\mathbf C}(C)$, then
  $\mathcal M^{\ex}(D,D)$ is a proper ideal of $\End_{\mathbf C}(D)$,
  which is not maximal by Theorem \ref{t:MainTheorem}. By Lemma
  \ref{l:DefinitionExtension},
  $\mathcal M^{\ex}(D,D) = \mathcal M(D,D)$ and, consequently,
  $\mathcal M(D,D)$ is a proper ideal of
  $\End_{\mathbf M (\mathbf C)}(D)$ that is not maximal. Since
  $\mathcal M$ is maximal in $\mathbf M(\mathbf C)$, this contadicts
  \cite[Lemma 2.4]{FacchiniPerone}. The contradiction proves the
  claim.

  Now let $\mathcal N$ be an ideal of $\mathbf C$ properly containing
  $\mathcal M^{\ex}$. We will prove that
  $\mathcal N=\Hom_{\mathbf C}$. Since
  $\mathcal N(C,C)=\End_{\mathbf C}(C)$ for each object $C$ not
  belonging to $\mathbf M(\mathbf C)$, it follows that
  $\mathcal N^{\re}$ properly contains $\mathcal M^{\ex\re}$. But
  $\mathcal M^{\ex\re}=\mathcal M$ by Lemma
  \ref{l:DefinitionExtension} and, since $\mathcal M$ is maximal in
  $\mathbf M(\mathbf C)$, we get that
  $\mathcal N^{\re}=\Hom_{\mathbf M(\mathbf C)}$. This fact with the
  previous claim gives that $\mathcal N = \Hom_{\mathbf C}$.

  Finally, it is easy to see that the two assignments are mutually
  inverse.
\end{proof}

\begin{remark}{\rm
  Let $\mathbf C$ be a preadditive category, $A$ and $C$ objects of
  $\mathbf C$ and $I$ an ideal of $\End_{\mathbf C}(C)$. Then,
  $\mathcal A_I(A,A) = \End_{\mathbf C}(A)$ if and only if each
  endomorphism of $C$ factoring through $A$ belongs to $I$.
  Consequently, if $\mathcal M$ is a maximal ideal in
  $\mathbf M(\mathbf C)$ and $C$ is an object with no maximal $N$
  satisfying $C \preceq N$, then the following
  conditions are equivalent:
  \begin{enumerate}
  \item $\mathcal M^{\ex}(C,C) = \End_{\mathbf C}(C)$.

  \item There exists an object $A \in \mathbf M(\mathbf C)$ with
    $\mathcal M(A,A) \neq \End_{\mathbf C}(A)$ such that each
    endomorphism of $A$ factoring through $C$ belongs to
    $\mathcal M(A,A)$.

  \item For each object $A \in \mathbf M(\mathbf C)$ with
    $\mathcal M(A,A) \neq \End_{\mathbf C}(A)$, every endomorphism of $A$ factoring through $C$ belongs to
    $\mathcal M(A,A)$.
  \end{enumerate}}
\end{remark}

Proposition \ref{p:MaximalMC} says that, in order to compute the
maximal ideals in a category $\mathbf C$, we can {\em (1)} determine
the subcategories $\mathbf M(\mathbf C)$ and $\mathbf S(\mathbf C)$,
and {\em (2)} find the maximal ideals $\mathcal M$ of
$\mathbf M(\mathbf C)$ such that
$\mathcal M^{\ex}(C,C)=\End_{\mathbf C}(C)$ for each object $C$ with
no maximal $N$ satisfying $C \preceq N$. We will
use this procedure in the following example.

\begin{example}{\rm Let $R$ be a simple non-artinian ring with
    $\Soc(R_R)$ non-projective as a right $R$-module. Then there
    exists a non projective simple right module $S$ contained in
    $R$. Consider the full subcategory
    $\mathbf C=\add(R_R) \cup \Add(S)$ of $\Modr$. Then
    $\mathbf M(\mathbf C) = \add(R_R)$ and
    $\mathbf S(\mathbf C)=\emptyset$. Since $R$ is simple, Example
    \ref{e:MaximalIdeals} says that the unique maximal ideal of
    $\mathbf M(\mathbf C)$ is the ideal $\mathcal A_0$ associated to
    the zero ideal of $R$. However, $\mathcal A_0(S,S) \neq \End_R(S)$
    since, if we take $f:R_R \rightarrow S$ an epimorphism and we
    denote by $g:S \rightarrow R_R$ the inclusion, we have that
    $g1_Sf \neq 0$, which means that $1_S \notin \mathcal
    A_0(S,S)$.
    Then, by Theorem \ref{p:MaximalMC}, $\mathbf C$ does not have
    maximal ideals.}
\end{example}

\begin{remark}{\rm Let $\mathbf C$ be a preadditive category. As the
    preceding example shows, there does not exist a bijective
    correspondence between maximal ideals in $\mathbf C$ and maximal
    ideals in $\mathbf M(\mathbf C)$. This is due to the fact that
    there can exist maximal ideals $\mathcal M$ of
    $\mathbf M(\mathbf C)$ such that
    $\mathcal M^{\ex}(C,C) \neq \End_{\mathbf C}(C)$ for some object
    $C$ with no maximal $N$ satisfying $C \preceq N$. More precisely,
    the map $(-)^{\re}$ from the class of maximal ideals of $\mathbf
    C$ to the class of maximal ideals of $\mathbf M(\mathbf C)$ is
    injective, and its image consists of those maximal ideals
    $\mathcal M$ of $\mathbf M(\mathbf C)$ for which $\mathcal
    M^{\re}(C,C) = \End_{\mathbf C}(C)$ for each object $C$ with no
    maximal object $M$ with $C\preceq M$. The map $(-)^{\re}$ is not,
    in general, surjective .}
\end{remark}

\section{MAXIMAL IDEALS INDUCED BY AN INDECOMPOSABLE INJECTIVE MODULE}
\label{sec:maxim-ideals-induc}

We conclude the paper computing the maximal ideals of a certain
subcategory of a module category. The main idea in this computation is
to apply the results of the previous sections to describe the
ideal associated to a maximal ideal in the endomorphism ring of an
indecomposable injective module.

Let $R$ be a ring that we fix through the rest of the section and let
$F$ be an injective $R$-module. Recall that $F$ is indecomposable if and
only if $F$ has a local endomorphism ring \cite[Theorem
25.4]{AndersonFuller}.

\begin{lemma}
  Let $F$ be an indecomposable injective module, and let $I$ be the
  maximal ideal of $\End_R(F)$. The following conditions are
  equivalent for modules $A$, $B$ and $f \in \Hom_{R}(A,B)$:
  \begin{enumerate}
  \item $f \notin \mathcal A_I(A,B)$.

  \item There exists $\alpha\colon F \rightarrow A$ and
    $\beta\colon B \rightarrow F$ such that $\beta f \alpha = 1_F$.

  \item There exists a submodule $C$ of $A$ such that $C \cong F$ and
    $C \cap \ker f =0$.
  \end{enumerate}
\end{lemma}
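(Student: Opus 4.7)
The proof will be a cyclic chain (1)$\Rightarrow$(2)$\Rightarrow$(3)$\Rightarrow$(1). The key preliminary observation I would record at the start is that, since $F$ is indecomposable injective, the local ring $\End_R(F)$ has maximal ideal
\[
 I=\{\,h\in\End_R(F): h \text{ is not an automorphism}\,\}.
\]
In fact, any injective endomorphism of $F$ has an image which is injective, hence a direct summand, hence (by indecomposability) either $0$ or $F$; so a monomorphism in $\End_R(F)$ is automatically an automorphism. This identification of $I$ will be used repeatedly.

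For (1)$\Rightarrow$(2), the failure of $f\in\mathcal A_I(A,B)$ furnishes $\alpha_0\colon F\to A$ and $\beta_0\colon B\to F$ with $\beta_0 f\alpha_0\notin I$; by the observation above, $h:=\beta_0 f\alpha_0$ is an automorphism of $F$. Setting $\beta:=h^{-1}\beta_0$ and $\alpha:=\alpha_0$ then gives $\beta f\alpha=1_F$. For (2)$\Rightarrow$(3), given such $\alpha,\beta$, note that $\alpha$ is a monomorphism (if $\alpha(x)=0$ then $x=\beta f\alpha(x)=0$), so $C:=\alpha(F)$ is a submodule of $A$ isomorphic to $F$. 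If $y\in C\cap\ker f$, write $y=\alpha(x)$; then $0=\beta f(y)=\beta f\alpha(x)=x$, so $y=0$.

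For (3)$\Rightarrow$(1), let $\alpha\colon F\to A$ be the composition of a chosen isomorphism $F\cong C$ with the inclusion $C\hookrightarrow A$. Since $C\cap\ker f=0$, the morphism $f\alpha\colon F\to B$ is a monomorphism. Injectivity of $F$ then yields a retraction $\beta\colon B\to F$ with $\beta f\alpha=1_F$, which is a unit of $\End_R(F)$, hence not in $I$. This exhibits $f\notin\mathcal A_I(A,B)$.

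The only place I would pause is the preliminary identification of $I$ as the set of non-monomorphisms; everything else is a direct manipulation of the definition of $\mathcal A_I$ together with the universal property of the injective module $F$. There is no serious obstacle in this lemma; its role is to translate the abstract ideal $\mathcal A_I$ into concrete module-theoretic data (presence of a copy of $F$ in $A$ meeting $\ker f$ trivially), which will presumably be exploited in the computation of maximal ideals in $\mathbf C_F$.
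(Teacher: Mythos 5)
Your proof is correct and follows essentially the same route as the paper's: use that $\End_R(F)$ is local (so elements outside $I$ are automorphisms) to pass from (1) to (2), read off the copy $C=\alpha(F)$ with $C\cap\ker f=0$ from a splitting $\beta f\alpha=1_F$, and recover such a splitting from (3) via injectivity of $F$. The only differences are cosmetic: you argue in a single cycle rather than the paper's pairwise equivalences, and you spell out the (standard) identification of $I$ with the non-automorphisms, which the paper takes for granted.
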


\begin{proof}
  {\em (1)} $\Rightarrow$ {\em (2)}. If $f \notin \mathcal A_I$, there
  exists $\alpha\colon F \rightarrow A$ and
  $\beta\colon A \rightarrow F$ such that $\beta f \alpha \notin I$.
  Since the endomorphisms of $F$ not belonging to $I$ are isomorphisms,
  there exists an inverse $\gamma \in \End_R(F)$ of $\beta f
  \alpha$. Then $\gamma \beta f \alpha = 1_F$.

 {\em (2)} $\Rightarrow$ {\em (1)} is trivial.
 
  {\em (2)}${}\Rightarrow{}${\em (3)}. Set $C = \alpha(F)$, which is
  isomorphic to $F$ as $\alpha$ is monic. Then $A=C\oplus\ker(\beta f)$. In particular, $C \cap \ker f \subseteq C \cap \ker(\beta f )=0$.

  {\em (3)}${}\Rightarrow{}${\em (2)}. Let
  $\alpha\colon F \rightarrow A$ be a monomorphism with image
  $C$. Since $C \cap \ker f = 0$, $f\alpha$ is a monomorphism. Since $F$ is an injective, this implies the existence of $\beta\colon B\to F$ with $\beta f \alpha = 1_F$, as desired.\end{proof}

As a byproduct of this result we get:

\begin{corollary}\label{c:IdealAI}
  Let $F$ be an indecomposable injective module, and let $I$ be the
  maximal ideal of $\End_R(F)$. Then, for any pair $A,B$ of modules, $\mathcal A_I(A,B) \neq \Hom_R(A,B)$ if and only if both $A$
  and $B$ contain a submodule isomorphic to $F$.
\end{corollary}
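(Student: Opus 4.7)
The corollary is essentially an unpacking of the previous lemma, using the fact that $F$ being injective forces any submodule isomorphic to $F$ to be a direct summand. My plan has two matching directions.

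For the ``only if'' direction, assume $\mathcal A_I(A,B) \neq \Hom_R(A,B)$ and pick $f \in \Hom_R(A,B) \setminus \mathcal A_I(A,B)$. By equivalence $(1) \Leftrightarrow (2)$ of the previous lemma, there exist $\alpha \colon F \to A$ and $\beta \colon B \to F$ with $\beta f \alpha = 1_F$. Then $\alpha$ is a split monomorphism (with retraction $\beta f$), so $\alpha(F) \cong F$ is a submodule of $A$. Similarly $f\alpha \colon F \to B$ is a split monomorphism (with retraction $\beta$), so $(f\alpha)(F) \cong F$ is a submodule of $B$. Hence both $A$ and $B$ contain a submodule isomorphic to $F$.

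For the ``if'' direction, assume $A$ has a submodule $A' \cong F$ and $B$ has a submodule $B' \cong F$. Because $F$ is injective, $A'$ and $B'$ are direct summands of $A$ and $B$ respectively: write $A = A' \oplus A''$ and $B = B' \oplus B''$, let $\iota_{A'}, \iota_{B'}$ be the inclusions and $\pi_{A'}, \pi_{B'}$ the projections, and fix isomorphisms $u \colon F \to A'$ and $v \colon B' \to F$. Set
\[
\alpha = \iota_{A'} u \colon F \to A, \qquad \beta = v\, \pi_{B'} \colon B \to F,
\]
and define
\[
f = \iota_{B'} v^{-1} u^{-1} \pi_{A'} \colon A \to B.
\]
A direct computation gives $\beta f \alpha = v\, \pi_{B'} \iota_{B'} v^{-1} u^{-1} \pi_{A'} \iota_{A'} u = 1_F$, so the previous lemma yields $f \notin \mathcal A_I(A,B)$. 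Therefore $\mathcal A_I(A,B) \neq \Hom_R(A,B)$.

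There is no real obstacle here; the only point requiring care is the standard fact that an injective submodule splits off, which is exactly what lets us construct the witness $f$ from the mere existence of copies of $F$ inside $A$ and $B$. Once that observation is in place, the rest is bookkeeping with the projections and inclusions of the splittings.
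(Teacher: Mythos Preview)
Your proof is correct and follows essentially the same approach as the paper: both directions invoke the previous lemma and rely on the fact that an injective submodule splits off. The only cosmetic difference is that the paper phrases each direction via condition~(3) of the lemma (a submodule $C\cong F$ with $C\cap\ker f=0$, and for the converse the map $f\oplus 0$), whereas you work directly with condition~(2); the constructed witness morphism is the same in both cases.
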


\begin{proof}
  If $\mathcal A_I(A,B) \neq \Hom_R(A,B)$ and
  $f \colon A \rightarrow B$ does not belong to $\mathcal A_I(A,B)$,
  then, by the previous lemma, there exists $C \leq A$ with
  $C \cong F$ and $C \cap \ker f =0$. This implies that $f(C)$ is
  isomorphic to $F$. Thus $C$ and $f(C)$ are submodules with the desired property.

  Conversely, assume $A=A_1\oplus A_2$ and $B=B_1 \oplus B_2$ with
  $A_1 \cong B_1 \cong F$. Let $f:A_1 \rightarrow B_1$ be an
  isomorphism and let $F:A \rightarrow B$ be the morphism
  $f \oplus 0$. Then $F$ trivially satisfies {\em (3)} of the
  previous lemma and, consequently, $F$ does not belong to
  $\mathcal A_I(A,B)$.
\end{proof}

It follows that, in order to determine
$\mathcal A_I$, we only have to look at the modules $A$ containing
isomorphic copies of $F$. These modules have a nice description if $R$
is right noetherian as we will prove next. We shall use the following
well known facts about an injective module $F$:
\begin{enumerate}
\item If $K$ is a non-zero submodule of $F$, then there exists an
  injective submodule $G$ of $F$ containing $K$ such that the
  inclusion $K \leq G$ is an injective envelope. In particular, if $F$
  is indecomposable, then the inclusion $K \leq F$ is an injective
  envelope of $K$.

\item $F$ satisfies the exchange property, which means that for each
  module $N$ and each decomposition
  $F \oplus N = \bigoplus_{\alpha < \kappa}A_\alpha$, there exists a
  submodule $B_\alpha \leq A_\alpha$ for each $\alpha < \kappa$ such
  that
  $F \oplus N =F \oplus\left( \bigoplus_{\alpha <
      \kappa}B_\alpha\right)$.
\end{enumerate}

\begin{theorem}\label{t:Decompositions}
  Suppose $R$ right noetherian. Let $F$ be an indecomposable injective
  module. Then every module $M$ has a decomposition $M=M_1 \oplus M_2$
  where:
  \begin{enumerate}
  \item  $M_1 \cong F^{(\Gamma)}$ for some set $\Gamma$.
  \item $M_2$ does not contain submodules isomorphic to $F$.
  \end{enumerate}
  Moreover, $\Gamma$ is uniquely determined up to cardinality and
  $M_2$ is uniquely determined up to isomorphism.
\end{theorem}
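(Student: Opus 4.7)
The proof splits into existence and uniqueness, the first using Zorn's lemma together with the Bass--Papp theorem, the second using the exchange property of indecomposable injectives together with Matlis' uniqueness theorem.

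For existence, I would consider the set $\mathcal F$ of families $\{F_\gamma\}_{\gamma\in\Gamma'}$ of submodules of $M$ with each $F_\gamma\cong F$ and $\sum_\gamma F_\gamma$ an internal direct sum, ordered by inclusion. Chains have their unions as upper bounds, so Zorn's lemma produces a maximal such family, indexed by some set $\Gamma$. Put $M_1=\bigoplus_{\gamma\in\Gamma}F_\gamma\cong F^{(\Gamma)}$. The noetherian assumption on $R$ enters through the Bass--Papp theorem: any direct sum of injectives is injective. Thus $M_1$ is injective and splits off: $M=M_1\oplus M_2$ for some submodule $M_2$. If $M_2$ contained a submodule $N\cong F$, then $N\cap M_1=0$ and $\{F_\gamma\}\cup\{N\}$ would be a strictly larger direct family, contradicting maximality. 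Hence $M_2$ has no submodule isomorphic to $F$.

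For uniqueness, suppose $M=M_1\oplus M_2=N_1\oplus N_2$ with $M_1\cong F^{(\Gamma)}$ and $N_1\cong F^{(\Lambda)}$. I would invoke the exchange property---applied to $M_1$ in one sweep, since over a right noetherian ring every injective module is $\Sigma$-algebraically compact and hence has the full exchange property---to obtain a decomposition $M=M_1\oplus B_1\oplus B_2$ with $B_1\leq N_1$ and $B_2\leq N_2$. The modular law then gives $N_i=B_i\oplus C_i$ with $C_1\oplus C_2\cong M_1\cong F^{(\Gamma)}$. By Matlis' uniqueness of the indecomposable injective decomposition, every direct summand of $F^{(\Gamma)}$ is itself isomorphic to a coproduct of copies of $F$, so $C_2\cong F^{(\Gamma_2)}$ for some $\Gamma_2$. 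Since $C_2$ is a direct summand of $N_2$ and $N_2$ contains no copy of $F$, this forces $\Gamma_2=\emptyset$, so $C_2=0$ and $N_2=B_2$. Reading $M=M_1\oplus B_1\oplus N_2$ against $M=M_1\oplus M_2$ one obtains $M_2\cong B_1\oplus N_2$, so $B_1$ has no submodule isomorphic to $F$ either. But $B_1$ is a direct summand of $N_1\cong F^{(\Lambda)}$, so by the same Matlis argument $B_1\cong F^{(\Lambda_1)}$, forcing $B_1=0$. Hence $N_1=C_1\cong M_1$ and $M_2\cong N_2$. Finally the equality $|\Gamma|=|\Lambda|$ follows from $F^{(\Gamma)}\cong F^{(\Lambda)}$ and the Krull--Remak--Schmidt--Azumaya theorem, applicable because $\End_R(F)$ is local.

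The main technical obstacle is the single-sweep application of the exchange property to the possibly uncountable coproduct $M_1$, which goes beyond the single-indecomposable exchange recorded in the preliminaries. One bypasses this either by quoting Zimmermann's theorem that $\Sigma$-algebraically compact modules enjoy the full exchange property, or by a transfinite induction that exchanges one summand $F_\gamma$ at a time, verifying at every stage that the ``exchanged copy'' ends up inside $N_1$ and never inside $N_2$, precisely because $N_2$ contains no submodule isomorphic to $F$.
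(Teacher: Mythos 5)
Your proof is correct, and its uniqueness half is essentially the argument of the paper: apply the (full) exchange property to $M_1$ against the second decomposition, observe that whatever gets exchanged out of the $F$-free part must vanish because that part contains no copy of $F$, and kill the remaining summand of $N_1$ by a Krull--Remak--Schmidt--Azumaya argument. The one genuine (if minor) divergence is in the existence half. The paper applies Zorn's lemma to the poset of submodules of $M$ isomorphic to some $F^{(\Gamma)}$, which forces it to prove that the union of a chain of such submodules is again of this form; this requires Matlis' decomposition theorem for the (injective) union plus an injective-envelope argument identifying each indecomposable summand with $F$. You instead apply Zorn to independent families of copies of $F$, where the chain condition is automatic because independence is a finitary condition; this is slightly more elementary and buys you a shorter existence proof, at the cost of no generality. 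Your worry about the single-sweep exchange of the possibly infinite coproduct $M_1$ is legitimate but already resolved by the paper's own preliminaries: the exchange property is recorded there for an arbitrary injective module and an arbitrary index set $\kappa$ (this is Fuchs' theorem that quasi-injective modules have the full exchange property), and $M_1$ is injective by Bass--Papp, so no transfinite induction or appeal to $\Sigma$-algebraic compactness is needed --- though the route through Zimmermann's theorem you sketch is also valid, since $F$ is $\Sigma$-pure-injective over a right noetherian ring.
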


\begin{proof}
  If $M$ does not have submodules isomorphic to $F$, there is nothing
  to prove. So suppose that $M$ has submodules isomorphic to $F$ and
  consider the  non-empty family of submodules
  \begin{displaymath}
    \mathcal S = \{N \leq M: N
    \cong F^{(\Gamma)} \textrm{ for some set }\Gamma\}
  \end{displaymath}
  Let us show that $\mathcal S$ is inductive. Take a chain
  $\mathcal S'=\{\,M_\lambda: \lambda\in\Lambda\,\}$ in $\mathcal S$.
  We will prove that
  $M':=\bigcup_{\lambda\in\Lambda}M_\lambda \in \mathcal S$. Since $R$
  is right noetherian, directed colimits of injective modules are
  injective \cite[Exercise 8 of Chapter I]{CartanEilenberg}, so that $M'$ is
  injective. Hence, $M'$ has a direct-sum decomposition,
  $M'=\bigoplus_{i\in I}F_i$, where the submodules $F_i$ of $M'$ are
  injective and indecomposable \cite[Theorem~3.48, p.~82]{Lam}. Let
  $i \in I$ and $x$ be a non-zero element of $F_i$; note that $F_i$ is
  the injective envelope of $xR$. Since $x\in M_\lambda$ for some
  $\lambda\in\Lambda$, and $M_\lambda \in \mathcal S$, $x$ belongs to
  a direct summand of $M_\lambda$ isomorphic to $F^n$ for some
  $n$. But, as $F_i$ is the injective envelope of $xR$, $F^n$ must
  contain a direct summand isomorphic to $F_i$. This implies that
  $F_i \cong F$ because all indecomposable direct summands of $F^n$
  are isomorphic to $F$. Consequently, $M' \in \mathcal S$.
  
  The first part of the statement now follows taking a maximal element
  $M_1$ of $\mathcal S$ and a submodule $M_2 $ of $M$ with
  $M_1 \oplus M_2 = M$.

  In order to prove the last part of the statement, suppose that
  $M = M'_1 \oplus M'_2$ is another decomposition of $M$ satisfying
  {\em (1)} and {\em (2)}. Write
  $M_1 = \bigoplus_{\beta < \kappa}G_\beta$ and
  $M'_1 = \bigoplus_{\alpha < \lambda} F_\alpha$ for suitable families
  of submodules $\{G_\beta:\beta < \kappa\}$ and
  $\{F_\alpha:\alpha < \lambda\}$ of $M_1$ and $M'_1$ respectively,
  and cardinals $\kappa$ and $\lambda$, satisfying
  $G_\beta \cong F_\alpha \cong F$ for each $\beta < \kappa$ and
  $\alpha < \lambda$.

  Since $M_1$ satisfies the exchange property, there exist submodules
  $H_\alpha \leq F_\alpha$ for each $\alpha < \lambda$ and
  $N'_2 \leq M'_2$ such that
  $M = M_1 \oplus \left(\bigoplus_{\alpha < \lambda}H_\alpha\right)
  \oplus N'_2$.
  Since $F_\alpha$ is indecomposable for each $\alpha < \lambda$, it
  follows that $H_\alpha=0$ or $H_\alpha = F_\alpha$. But, as
  $\left(\bigoplus_{\alpha < \lambda}H_\alpha\right) \oplus N'_2 \cong
  M_2$
  and $M_2$ does not contain any submodule isomorphic to $F$, we get that $H_\alpha = 0$ for each $\alpha < \lambda$. That is,
  $M=M_1 \oplus N'_2$.

  Applying the modular law, we see that
  $M'_2 = N'_2 \oplus (M'_2 \cap M_1)$. We claim that
  $M'_2 \cap M_1=0$. Assume the contrary, i.~e., that
  $M_1 \cap M'_2 \neq 0$. Since $M_1 \cap M'_2$ is a direct summand of
  $M$, it is a direct summand of $M_1$ and, consequently, it is
  injective. As it is non-zero, there exists $\beta < \kappa$ such
  that $G_\beta \cap M_1 \cap M'_2 \neq 0$. Let $x$ be a non-zero
  element in this intersection. Notice that $xR \leq G_\beta$ is an
  injective envelope. Since $M_1 \cap M'_2$ is injective, there
  exists an injective envelope $C$ of $xR$ contained in $M_1 \cap M'_2$.
  But $C$ is isomorphic to $F$ and $M'_2$ does not contain any
  submodule isomorphic to $F$, a contradiction. This proves the claim.

  As a consequence, $M=M_1 \oplus M'_2$. Then
  $M_1 \cong M'_1$ and $M_2 \cong M'_2$. By Azumaya's Theorem
  \cite[Theorem 12.6]{AndersonFuller}, $\kappa = \lambda$ and we are
  done.
\end{proof}

We can use this result to define the $F$-rank of a module $M$, for any
indecomposable injective module $F$ and any module $M$ over a right
noetherian ring.

\begin{definition}
  Suppose $R$ is right noetherian. Let $F$ be an indecomposable
  injective module. Given any module $M$ and any cardinal $\kappa$, we
  say that {\em $M$ has $F$-rank equal to $\kappa$} (written
  $\re_F(M) = \kappa$) if $M = M_1 \oplus M_2$, where $M_1 \cong F^{(\kappa)}$ and $M_2$ has no direct summand isomorphic to $F$.
  We will denote by $\mathbf C_F$ the full subcategory of $\Modr$
  whose objects are all modules of finite $F$-rank.
\end{definition}

Now we can compute the maximal ideals in the category $\mathbf C_F$
for an indecomposable injective module $F$. First of all, we compute
the subcategories $\mathbf M(\mathbf C_F)$ and
$\mathbf S(\mathbf C_F)$.

\begin{proposition}\label{p:SandM}
  Suppose $R$ is right noetherian. Let $F$ be an indecomposable
  injective module. Then:
  \begin{enumerate}
  \item $\mathbf M(\mathbf C_F) = \{M \in \mathbf C_F:\re_F(M)>0\}$.

  \item $\mathbf S(\mathbf C_F) =\{M \in \mathbf C_F:\re_F(M)=0\}$.
  \end{enumerate}
\end{proposition}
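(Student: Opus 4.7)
I would prove the two parts together. The nontrivial direction of (1) --- that $M \in \mathbf M(\mathbf C_F)$ whenever $\re_F(M) > 0$ --- follows directly from the definition of $\prec$ and the uniqueness statement in Theorem \ref{t:Decompositions}. Granted (1), the inclusion $\mathbf S(\mathbf C_F) \subseteq \{M : \re_F(M) = 0\}$ in (2) is immediate, while the reverse inclusion requires exhibiting, for each nonzero $M$ with $\re_F(M) = 0$, an object $A \in \mathbf M(\mathbf C_F)$ with $M \prec A$.

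\textbf{Proof of (1).} Set $n = \re_F(M) \geq 1$ and write $M = F^n \oplus M'$. Assume toward a contradiction $M \prec A$ for some $A \in \mathbf C_F$, with $E \subseteq \Hom_R(A,M) \times \Hom_R(M,A)$ as in Definition \ref{d:big}. I would extract inductively pairs $(f_i,g_i) \in E$, $i = 1, 2, \ldots$, satisfying $f_i g_j = 0$ whenever $j < i$: at step $k+1$, Definition \ref{d:big}{\em (2)} applied with $\varphi = g_j$ for $j \leq k$ guarantees that $\bigcup_{j=1}^{k}\{(f,g) \in E : f g_j \neq 0\}$ has cardinality strictly smaller than $|E|$, so a suitable $(f_{k+1},g_{k+1})$ exists in the complement. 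Consider $h_k\colon M^k \to A$, $(m_1,\ldots,m_k) \mapsto \sum_i g_i(m_i)$, and $H_k\colon A \to M^k$, $a \mapsto (f_1(a),\ldots,f_k(a))$: the composition $H_k h_k$ equals the matrix $(f_i g_j)_{i,j} \in M_k(\End_R M)$, which is upper triangular with $1_M$ on the diagonal, hence invertible. Thus $h_k$ is a split monomorphism and $M^k$, and in particular $F^{nk}$, is a direct summand of $A$. By the uniqueness of the $F$-rank in Theorem \ref{t:Decompositions}, this forces $nk \leq \re_F(A) < \infty$ for every $k$, the desired contradiction.

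\textbf{Construction for (2).} Let $M$ be nonzero with $\re_F(M) = 0$ and choose an infinite regular cardinal $\kappa$ for which $M$ is $<\kappa$-generated; set $A := F \oplus M^{(\kappa)}$. Proposition \ref{p:ExistenceBigObjectsGrothendieck}{\em (1)} gives $M \prec M^{(\kappa)}$, and since $M^{(\kappa)}$ is a direct summand of $A$, Lemma \ref{l:PropertiesOfBigObjects}{\em (1)} yields $M \prec A$. It then remains to verify $A \in \mathbf M(\mathbf C_F)$, which by (1) amounts to $0 < \re_F(A) < \infty$. The lower bound is clear from $F$ being a direct summand of $A$; the upper bound will follow once I show that $M^{(\kappa)}$ contains no submodule isomorphic to $F$, since then $A = F \oplus M^{(\kappa)}$ is the canonical decomposition of Theorem \ref{t:Decompositions} and $\re_F(A) = 1$.

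\textbf{Main obstacle.} The crux of the argument is the auxiliary claim that $\re_F(M) = 0$ propagates to arbitrary coproducts: $M^{(\kappa)}$ has no submodule isomorphic to $F$. The plan is by contradiction. An embedding $F \hookrightarrow M^{(\kappa)}$ splits by injectivity of $F$, so $M^{(\kappa)} = F \oplus N$. The exchange property of $F$ applied to the canonical decomposition $M^{(\kappa)} = \bigoplus_\alpha M_\alpha$ produces submodules $B_\alpha \leq M_\alpha = M$ with $M^{(\kappa)} = F \oplus \bigoplus_\alpha B_\alpha$. Taking the quotient by $\bigoplus_\alpha B_\alpha$ yields $\bigoplus_\alpha M/B_\alpha \cong F$, and indecomposability of $F$ forces all but one $M/B_\alpha$ to vanish; the surviving surjection $M \twoheadrightarrow F$ splits by injectivity of $F$, contradicting $\re_F(M) = 0$.
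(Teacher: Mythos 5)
Your argument is correct in substance but takes a genuinely different route from the paper's in the key direction of (1). Where the paper decomposes both $M$ and the putative larger object $N$ via Theorem \ref{t:Decompositions} and derives a contradiction from finitely many maps $p_i$ extending isomorphisms between the $F$-components, you instead extract from $E$ a ``triangular'' family $(f_i,g_i)$ with $f_ig_j=0$ for $j<i$, so that $M^k$ splits off $A$ for every $k$; combined with the uniqueness part of Theorem \ref{t:Decompositions} this gives $\re_F(A)\ge k\cdot\re_F(M)$ for all $k$, impossible in $\mathbf C_F$ once $\re_F(M)>0$. This is cleaner, and it proves a more general fact about the relation $\prec$ (namely that $A\prec B$ forces $B$ to contain $A^k$ as a direct summand for every finite $k$), sharpening the remark made after Corollary \ref{p:BigVectorSpaces}. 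You also isolate and prove a claim the paper uses silently: that $\re_F(M)=0$ implies $M^{(\kappa)}\in\mathbf C_F$ (indeed $\re_F(M^{(\kappa)})=0$). Without this, neither ``$M\prec M^{(\kappa)}$ in $\mathbf C_F$'' nor $F\oplus M^{(\kappa)}\in\mathbf M(\mathbf C_F)$ is justified, so making it explicit is a genuine improvement.

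One justification in your final step is wrong, although the conclusion survives: an epimorphism onto an injective module need not split (injectivity splits monomorphisms, not epimorphisms; $\mathbb Z^{(\omega)}\twoheadrightarrow\mathbb Q$ does not split). What saves you is structure you already have in hand: since $F'\cong\bigoplus_\alpha M_\alpha/B_\alpha$ and $F'$ is indecomposable, you get $B_\alpha=M_\alpha$ for all $\alpha\ne\alpha_0$, hence $M^{(\kappa)}=F'\oplus B_{\alpha_0}\oplus\bigoplus_{\alpha\ne\alpha_0}M_\alpha$. Thus $B_{\alpha_0}$ is a direct summand of $M^{(\kappa)}$ contained in the summand $M_{\alpha_0}$, and the modular law gives $M_{\alpha_0}=B_{\alpha_0}\oplus C$ with $C\cong M_{\alpha_0}/B_{\alpha_0}\cong F$ --- the desired contradiction with $\re_F(M)=0$. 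Replace ``splits by injectivity of $F$'' with this modular-law argument (or simply note $M\cong M^{(\kappa)}/\bigoplus_{\alpha\ne\alpha_0}M_\alpha\cong F'\oplus B_{\alpha_0}$). Finally, both you and the paper tacitly exclude $M=0$, which formally lies in $\mathbf M(\mathbf C_F)$ yet has $F$-rank $0$; this edge case is harmless but worth a parenthetical.
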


\begin{proof}
  {\em (1)} Let $M$ be a module in $\mathbf C_F$ with $\re_F(M)=0$. We can find an infinite cardinal $\kappa$
  such that $M$ is $< \kappa$-generated in $\Modr$. By Proposition
  \ref{p:ExistenceBigObjectsGrothendieck}, $M \prec M^{(\kappa)}$ in $\Modr$. Since $\mathbf C_F$ is a full
  subcategory of $\Modr$ and $M^{(\kappa)} \in \mathbf C_F$, we get that 
  $M \prec M^{(\kappa)}$ in $\mathbf C_F$.
  Consequently, $M$ does not belong to $\mathbf M(\mathbf C_F)$. This
  proves the inclusion
  $\mathbf M(\mathbf C_F) \subseteq \{M \in \mathbf C_F:\re_F(M)>0\}$.

  In order to prove the inverse inclusion, let $M$ be any module with
  $\re_F(M)>0$ and suppose, by contradiction, that
  $M \notin \mathbf M(\mathbf C_F)$. Then there exists
  $N \in \mathbf C_F$ such that $M \prec N$. Let $M=M_1 \oplus M_2$
  and $N=N_1 \oplus N_2$ be the decompositions given by Theorem
  \ref{t:Decompositions}, and let $E$ be the set of Definition
  \ref{d:big}. Write $M_1=\bigoplus_{i=1}^nG_i$ and
  $N_1=\bigoplus_{j=1}^mF_j$ for modules $G_i$ and $F_j$ isomorphic to
  $F$ for each $i =1, \ldots, n$ and $j=1, \ldots, m$.

  We claim that $f(N_1) \neq 0$ for each $(f,g) \in E$. Given
  $i =1, \ldots, n$, $g(G_i)$ is isomorphic to $F$. Then
  $g(E_i)\cap N_1 \neq 0$ since, otherwise, $N_1 \oplus g(E_i)$ would
  be a direct summand of $N$, and $N_2$ would contain a submodule
  isomorphic to $F$. Now, taking $y \in g(E_i) \cap N_1$ non-zero and
  $x \in E_i$ with $g(x)=y$, we have that $f(y)=x \neq 0$. This proves
  the claim.

  For each $i =1, \ldots, m$, let $q_i:G_1 \rightarrow F_i$ be
  an isomorphism. Then $q_i$ extends to a morphism $p_i\colon M \rightarrow N$.
  The preceeding claim says that for each $(f,g) \in E$, $fp_i\neq 0$
  for some $i =1, \ldots, m$. Consequently, if   \begin{displaymath}
    E_i = \{(f,g) \in E:fp_i \neq 0\}
  \end{displaymath}
  for each $i =1, \ldots, m$, we conclude that
  $E \subseteq \bigcup_{i=1}^mE_i$. This is a contradiction, because
  the second set has cardinality smaller than $|E|$ by 
  Definition \ref{d:big}{\em (2)}.

  The conclusion is that there is no object $N \in \mathbf C_F$ with $M \prec N$, so that $M \in \mathbf M(\mathbf C_F)$.

  {\em (2)} As a direct consequence of {\em (1)} we have
  $\mathbf S(\mathbf C_F) \subseteq \{M \in \mathbf C_F:\re_F(M)=0\}$.
  In order to see the other inclusion, fix $M \in \mathbf C_F$ with
  $\re_F(M)=0$. Then, as in the proof of {\em (1)}, there exists an
  infinite cardinal $\kappa$ such that $M\prec M^{(\kappa)}$ in $\mathbf C_F$. By Lemma
  \ref{l:IdealsInDirectSums}, $M\prec F \oplus M^{(\kappa)}$. Then $M \in \mathbf S(\mathbf C_F)$ because
  $F \oplus M^{(\kappa)} \in \mathbf M(\mathbf C_F)$ by~{\em (1)}.
\end{proof}

Finally, we can determine all maximal ideals of the category
$\mathbf C_F$ for an indecomposable injective module $F$ over a right
noetherian ring.

\begin{proposition}
  Suppose $R$ is right noetherian. Let $F$ be an indecomposable
  injective module and $I$ be the maximal ideal of $\End_R(F)$. Then
  $\mathcal A_I$ is the unique maximal ideal of $\mathbf C_F$.
\end{proposition}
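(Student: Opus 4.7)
The plan is to combine Corollary~\ref{c:IdealAI} with the decomposition supplied by Theorem~\ref{t:Decompositions} and the locality of $\End_R(F)$, so that every proper ideal of $\mathbf C_F$ is controlled by its behavior on the single object $F$. First, $\mathcal A_I$ is a proper ideal of $\mathbf C_F$ since $\mathcal A_I(F,F) = I \neq \End_R(F)$. It then remains to verify (i) that $\mathcal A_I$ is maximal and (ii) that any maximal ideal of $\mathbf C_F$ equals $\mathcal A_I$. The main device is the decomposition $C = F^n \oplus C_2$ of Theorem~\ref{t:Decompositions}, in which $C_2$ contains no submodule isomorphic to $F$ and hence satisfies $\mathcal A_I(C_2, C_2) = \End_R(C_2)$ by Corollary~\ref{c:IdealAI}.

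For maximality, I would take an ideal $\mathcal N$ of $\mathbf C_F$ with $\mathcal A_I \subsetneq \mathcal N$ and pick $f \in \mathcal N(A,B) \setminus \mathcal A_I(A,B)$. The lemma preceding Corollary~\ref{c:IdealAI} furnishes $\alpha\colon F \to A$ and $\beta\colon B \to F$ with $\beta f \alpha = 1_F$, so $1_F \in \mathcal N(F,F)$. Given any $C \in \mathbf C_F$, write $C = F^n \oplus C_2$. If $n = 0$, then $1_C \in \mathcal A_I(C,C) \subseteq \mathcal N(C,C)$. If $n \geq 1$, the idempotent $e_{F^n} = \sum_{i=1}^n \iota_i 1_F \pi_i$ lies in $\mathcal N(C,C)$ because $1_F$ does, while $e_{C_2} = \iota_{C_2} 1_{C_2} \pi_{C_2}$ lies in $\mathcal N(C,C)$ by applying the $n = 0$ case to $C_2$ (or is zero if $C_2 = 0$); summing, $1_C \in \mathcal N(C,C)$. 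Hence $\mathcal N = \Hom_R$, proving maximality.

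For uniqueness, let $\mathcal M$ be any maximal ideal of $\mathbf C_F$. The critical step is to rule out $\mathcal M(F,F) = \End_R(F)$: otherwise, the very same computation as in the preceding paragraph (with Remark~\ref{r:MaximalInBigObjects} supplying $\mathcal M(C_2, C_2) = \End_R(C_2)$ for any $C_2 \in \mathbf S(\mathbf C_F)$ in place of the $n=0$ case) would force $\mathcal M = \Hom_R$, contradicting that $\mathcal M$ is proper. Since $\End_R(F)$ is local with unique maximal ideal $I$, this forces $\mathcal M(F,F) \subseteq I$, and \cite[Lemma~2.4]{FacchiniPerone} applied to the object $F$ identifies $\mathcal M(F,F)$ as a maximal ideal of $\End_R(F)$ (so equal to $I$) and $\mathcal M = \mathcal A_I$. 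The only real obstacle is the bookkeeping required to keep the $F^n$-part and the $C_2$-part of the decomposition in step, and to notice that the same idempotent argument $1_C = e_{F^n} + e_{C_2}$ drives both halves of the proof.
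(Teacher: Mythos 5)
Your proof is correct, and it takes a noticeably more direct route than the paper's. The paper first invokes Theorem~\ref{p:MaximalMC} to reduce everything to the subcategory $\mathbf M(\mathbf C_F)$, and then verifies the two conditions of \cite[Lemma 2.4]{FacchiniPerone} for $\mathcal A_I$ there: that $\mathcal A_I(M,M)$ is a maximal ideal of $\End_R(M)$ for every $M\in\mathbf M(\mathbf C_F)$ (via Lemma~\ref{l:IdealsInDirectSums}), and that $\mathcal A_I=\mathcal A_{J_0}$ for $J_0=\mathcal A_I(M,M)$. You bypass all of that for the maximality half: the observation that any ideal $\mathcal N\supsetneq\mathcal A_I$ must contain $1_F$ (via the lemma preceding Corollary~\ref{c:IdealAI}), combined with the decomposition $1_C=e_{F^n}+e_{C_2}$ from Theorem~\ref{t:Decompositions} and the fact that $\mathcal A_I(C_2,C_2)=\End_R(C_2)$ when $C_2$ has $F$-rank $0$ (Corollary~\ref{c:IdealAI}, using that a copy of $F$ inside $C_2$ would split off since $F$ is injective), immediately forces $\mathcal N=\Hom_R$. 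This is shorter and avoids Theorem~\ref{p:MaximalMC} entirely. For uniqueness the two arguments converge: both must rule out $\mathcal M(F,F)=\End_R(F)$ for a maximal ideal $\mathcal M$ and then use locality of $\End_R(F)$ together with \cite[Lemma 2.4]{FacchiniPerone} to conclude $\mathcal M(F,F)=I$ and $\mathcal M=\mathcal A_I$; the paper gets there through Lemma~\ref{l:IdealsInDirectSums} applied to $M=M_1\oplus M_2$, whereas you reuse the idempotent computation together with Remark~\ref{r:MaximalInBigObjects} and Proposition~\ref{p:SandM}(2) (needed to know that every $F$-rank-$0$ object $C_2$ admits some $N$ with $C_2\prec N$ in $\mathbf C_F$, so that $\mathcal M(C_2,C_2)=\End_R(C_2)$). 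Both work; what your version buys is economy of machinery, while the paper's version illustrates the general restriction/extension formalism it has just developed.
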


\begin{proof}
  By Theorem \ref{p:MaximalMC}, we only have to compute the maximal
  ideals of $\mathbf M(\mathbf C_F)$. First, we prove that
  $\mathcal A_I$ is a maximal ideal of $\mathbf M(\mathbf C_F)$. Given
  $M \in \mathbf M(\mathbf C_F)$, since
  $\mathcal A_I(M,M) \neq \End_R(M)$ by Corollary
  \ref{c:IdealAI}, we have to see, applying \cite[Lemma
  2.4]{FacchiniPerone}, that
  \begin{enumerate}
  \item[{\em (a)}] $\mathcal A_I(M,M)$ is maximal in $\End_R(M)$ and,

  \item[{\em (b)}] if $J_0=\mathcal A_I(M,M)$, then
    $\mathcal A_I = \mathcal A_{J_0}$.
  \end{enumerate}
  Let $M=M_1 \oplus M_2$ be the decomposition of $M$ given in Theorem
  \ref{t:Decompositions}. Note that, by Lemma
  \ref{l:IdealsInDirectSums} and Corollary \ref{c:IdealAI},
  \begin{displaymath}
    \mathcal A_I(M,M) = \{f \in \End_R(M):\pi_1 f \iota_1 \in \mathcal
    A_I(M_1,M_1)\},
  \end{displaymath}
  where $\pi_i:M \rightarrow M_i$ and $\iota_i:M_i \rightarrow M$ are
  the corresponding projections and inclusions for $i =1,2$.
  As $M_1 \in \add(F)$ and $\mathcal A_I$ is a maximal ideal
  in this category by Example \ref{e:MaximalIdeals},
  $\mathcal A_I(M_1,M_1)$ is a maximal ideal in $\End_R(M_1,M_1)$. In
  order to see that $\mathcal A_I(M,M)$ is maximal, let $J$ be an
  ideal of $\End_R(M)$ strictly containing $\mathcal A_I(M,M)$. Let $f \in J$ not belonging to $\mathcal A_I(M,M)$. Then
  $\pi_1 f \iota_1$ does not belong to $\mathcal A_I(M_1,M_1)$ and,
  by the maximality of this ideal in $\End_R(M_1,M_1)$, there exist
  $g \in \mathcal A_I(M_1,M_1)$ and $\alpha, \beta \in \End_R(M_1)$
  such that $1_{M_1} = g+\alpha \pi_1 f \iota_1 \beta$. Then we have
  the identity  \begin{displaymath}
    1_{M_1} \oplus 0 = g \oplus 0 + (\alpha \oplus 0) f (\beta \oplus 0)
  \end{displaymath} in $\End_R(M)$,
  with both $g$ and $(\alpha \oplus 0) f (\beta \oplus 0)$ in
  $J$. Consequently, $1_{M_1} \oplus 0 \in J$. Now use
  $0 \oplus 1_{M_2} \in J$ to get that
  $1_M = 1_{M_1}\oplus 0+0 \oplus 1_{M_2}\in J$ and that
  $J=\End_R(M)$.

  Let us prove {\em (b)}. Since $\mathcal A_{J_0}$ is the greatest of
  all the ideals $\mathcal I'$ of $\mathbf C_F$ such that
  $\mathcal I'(M,M) \leq J_0$, we conclude that
  $\mathcal A_I \subseteq \mathcal A_{J_0}$. In order to prove the
  other inclusion, we only have to see, by the same argument, that
  $\mathcal A_{J_0}(F,F) \leq I$. Let $f \in \mathcal
  A_{J_0}(F,F)$. Fix a
  monomorphism  $\alpha_1\colon F \rightarrow M_1$, which, as $\im \alpha_1$ is a direct summand, has an splitting
  $\beta_1:M_1 \rightarrow F$. Then
  note that $\iota_1\alpha_1 f \beta_1 \pi_1 \in J_0$, because
  $f \in \mathcal A_{J_0}(F,F)$. Then
  $\pi_1\iota_1 \alpha_1 f \beta_1 \iota_1 \alpha_1 \in \mathcal
  A_I(M_1,M_1)$
  and, consequently,
  $\beta_1\pi_1\iota_1 \alpha_1 f \beta_1 \iota_1 \alpha_1 \in
  \mathcal A_I(F,F) = I$. Since 
  \begin{displaymath}
    f = \beta_1\pi_1\iota_1 \alpha_1 f \beta_1 \iota_1 \alpha_1,
  \end{displaymath}
  we conclude that $f \in I$.

  To finish the proof, we will see that $\mathcal A_I$ is the unique
  maximal ideal of $\mathbf M(\mathbf C_F)$. Let $\mathcal M$ be any
  maximal ideal of $\mathbf M(\mathbf C_F)$ and
  $M \in \mathbf M(\mathbf C_F)$ be such that
  $\mathcal M(M,M) \neq \End_R(M)$. If $J=\mathcal M(M,M)$, then
  $\mathcal M=\mathcal A_J$ by \cite[Lemma 2.4]{FacchiniPerone}. Let
  $M=M_1 \oplus M_2$ be the decomposition of $M$ given by Theorem
  \ref{t:Decompositions}. By Lemma \ref{l:IdealsInDirectSums}, either 
  $\mathcal M(M_1,M_1)$ or $\mathcal M(M_2,M_2)$ have to be proper. But
  $M_2 \in \mathbf S(\mathbf C_F)$ by Proposition \ref{p:SandM}, so
  that $\mathcal M(M_2,M_2) = \End_R(M_2)$ by Remark
  \ref{r:MaximalInBigObjects}. Thus 
  $\mathcal M(M_1,M_1) \neq \End_R(M_1)$ which implies, again by Lemma
  \ref{l:IdealsInDirectSums}, that $\mathcal M(F,F) \neq \End_R(F)$.
  Since $I$ is the unique maximal ideal of $\End_R(F)$ and
  $\mathcal M(E,E)$ is maximal, we conclude that $\mathcal M(F,F)=I$.
  Now 
  $\mathcal M=\mathcal A_I$ by \cite[Lemma 2.4]{FacchiniPerone}, which concludes the proof.
\end{proof}

\begin{example}{\rm The category $\mathbf{C}_F$ has maximal ideals and
     objects $M,N$ with $M\prec N$ since, if $M$ is an object in $\mathbf{C}_F$ with
    $F$-rank 0, then each direct sum of copies of $M$ belongs to
    $\mathbf C_F$. By Proposition
    \ref{p:ExistenceBigObjectsGrothendieck}, there exist objects $N$ in
    $\mathbf C_F$ with $M\prec N$.}
\end{example}

\bibliographystyle{alpha} \bibliography{References}
 
\end{document}